\documentclass[12pt,reqno]{amsart}
\usepackage{a4wide,amsfonts,amsmath,latexsym,amssymb,euscript,eufrak,graphicx,units,mathrsfs}
\usepackage[utf8]{inputenc}
\usepackage{amsmath}
\usepackage{amsfonts}
\usepackage{amssymb}
\usepackage{amsthm}
\usepackage{floatrow}
\usepackage{blindtext}
\usepackage{multicol}
\usepackage[english]{babel}
\usepackage{enumerate}
\usepackage{eufrak}
\usepackage{graphicx}
\usepackage{caption}
\usepackage{subcaption}
\usepackage{float}
\usepackage{epstopdf}
\usepackage{multirow}
\usepackage{mathtools}
\usepackage{multirow}
\usepackage[usenames,dvipsnames]{color}

\numberwithin{equation}{section}

\newcommand{\HC}{\mathcal{H}}

\newcommand{\stepid}[1]{\mathbf{1}_{[0, #1]}}

\newcommand{\norm}[1]{\left\lVert#1\right\rVert}
\newcommand{\inner}[2]{\langle #1, #2 \rangle}

\newtheorem{theorem}{Theorem}[section]
\newtheorem{lemma}[theorem]{Lemma}

\theoremstyle{definition}

\theoremstyle{remark}
\newtheorem{remark}[theorem]{Remark}

\numberwithin{equation}{section} \setcounter{page}{1}

\begin{document}
\title[New Kolmogorov bounds in the CLT for   random ratios and applications]
{New Kolmogorov bounds in the CLT for   random ratios and applications}

\author[K. Es-Sebaiy]{ Khalifa Es-Sebaiy$^*$}
\address{Khalifa Es-Sebaiy ($^*$Corresponding author): Department of Mathematics, Faculty of Science, Kuwait University, Kuwait}
\email{khalifa.essebaiy@ku.edu.kw}
\author[F. Alazemi]{Fares Alazemi}
\address{Fares Alazemi: Department of Mathematics, Faculty of Science, Kuwait University, Kuwait}
\email{fares.alazemi@ku.edu.kw}

\begin{abstract}
\medskip
We develop techniques for determining an explicit Berry-Esseen bound
in the Kolmogorov distance for the normal approximation of  a ratio
 of   Gaussian functionals. We provide  an upper bound  in terms of the third and fourth
 cumulants, using  some novel techniques and   sharp estimates for cumulants.
 As applications, we study the rate of convergence of the distribution of   discretized  versions of minimum contrast and
maximum likelihood estimators of the drift parameter   of the
Ornstein-Uhlenbeck process. Moreover,  we derive upper bounds
  that   are strictly sharper than those available in the literature.\\

\end{abstract}

\maketitle

\medskip\noindent
{\bf Mathematics Subject Classifications (2020)}:  60F05; 60G15; 62F12; 60H07.

\medskip\noindent
{\bf Keywords:}  Quantitative CLT for a ratio
 of   Gaussian functionals, Kolmogorov
distance, parameter estimation, Ornstein-Uhlenbeck process,   high
frequency data.

\allowdisplaybreaks

\renewcommand{\thefootnote}{\arabic{footnote}}

 \section{Introduction}
 The present work deals with a quantitative central limit
theorem (a Berry-Esseen bound) using the Kolmogorov distance for a
ratio of Gaussian functionals. An example of such a ratio is the
maximum likelihood statistics for
 parameter estimation in  linear stochastic differential  equations.
This is both well-motivated by practical needs and theoretically
challenging. In the parameter estimation context, our  motivation is
that  the study of the asymptotic distribution of parameter
statistics is not very useful in general for practical purposes
unless the rate of convergence is known.

The
 well-known key step  to study upper bounds of the Kolmogorov distance
for the rates of convergence of the distribution of the estimators
having the form of a ratio of two random variables such that the
numerator has the rate of convergence of its distribution  and the
denominator  is positive and  converges in probability to one is to
separate the numerator and the denominator through the following
technical result of Michel and Pfanzagl \cite[Lemma 1]{MP}: let $X$
and $Z$ be any tow random variables on a probability space $(\Omega,
\mathcal{F}, \mathbb{P})$ with $P(Z>0)>0$. Then, for any
$\varepsilon>0$, we have 
\begin{eqnarray}d_{Kol}\left(\frac{X}{Z} ,
\mathcal{N}\right)&:=&\sup_{\{z\in\mathbb{R}\}}\left|\mathbb{P}\left\{\frac{X}{Z} \leq
z\right\}-\mathbb{P}\{\mathcal{N}\leq
z\}\right|\nonumber\\ &\leq& d_{Kol}\left(X ,
\mathcal{N}\right)+\mathbb{P}\{|Z-1|>\varepsilon\}+\varepsilon,\label{ineq-MP}\end{eqnarray}
where $\mathcal{N}$ denotes the standard normal distribution.\\
However, the upper bound  in \eqref{ineq-MP} is usually not sharp
for many  estimators of the drift parameter in stochastic
differential  equations. For instance, we prove that the estimate
\eqref{ineq-MP} does not provide an optimal rate of convergence for
the statistics studied in Section \ref{sect-appl}.

 Recently,
the following observation  provided an improved upper bound in
Kolmogorov distance for normal approximation of   $F_t/G_t$, where
$F_t$ and $G_t$ are random variables.
 Fix $T>0$. Let $f_t,g_t\in \mathcal{H}^{\odot 2}$ for all
$t>0$, and let  $b_t$ be a positive function of $t$ such that
$I_2(g_t)+b_t>0$ almost surely for all $t>0$.
  If $\max_{i=1,2,3}\psi_i(t)\rightarrow 0$ as $t\rightarrow
\infty$, where for every $t>0$,
\begin{eqnarray*}
\psi_1(t)&:=&\frac{1}{b_t^2}\sqrt{\left(b^2_t-2\Vert f_t\Vert^2_{\mathcal{H}^{\otimes 2}}\right)^2+8\Vert f_t\otimes_1 f_t\Vert^2_{\mathcal{H}^{\otimes 2}}},\\
\psi_2(t)&:=&\frac{2}{b_t^2}\sqrt{2 \Vert f_t \otimes_1
g_t\Vert_{\mathcal{H}^{\otimes2}}+\langle
f_t,g_t\rangle^2_{\mathcal{H}^{\otimes 2}}},\\
\psi_3(t)&:=&\frac{2}{b_t^2}\sqrt{\Vert
g_t\Vert^4_{\mathcal{H}^{\otimes 2}} +2\Vert g_t\otimes_1
g_t\Vert^2_{\mathcal{H}^{\otimes 2}}},
\end{eqnarray*}
then, according to    \cite[Corollary 1 ]{kp-JVA}, there exists a positive
constant $C$ such that for all $t$ sufficiently large,
\begin{equation}
d_{Kol}\left(\frac{I_2(f_t)}{ I_2(g_t)+b_t},\mathcal{N}\right) \leq
C \max_{i=1,2,3} \psi_i(t). \label{kp}
\end{equation}
Using  the estimate \eqref{kp}, an optimal Kolmogorov bound in the
CLT for statistics on the basis of continuous observations of the
drift parameter in various SDEs is obtained, see for example, the
papers \cite{EM} and \cite{kp-JVA}. On the other hand, the estimate
\eqref{kp} requires us to evaluate certain terms involving  norms
and contractions which are  not easily computable, and this may
present difficulties. In this context, as far as we know, there is
no result that uses the estimate \eqref{kp} to study the upper bound
 in Kolmogorov distance for normal approximation of
the  estimators based on discrete observations.
\\
The aim of the present work is to provide  a new explicit
Berry-Esseen bound in the Kolmogorov distance for the normal
approximation of  a ratio
 of random variables, which we can also apply to study  parameter estimation
 for discretely observed SDEs, see Theorem \ref{main-thm}. This leads to improved Berry-Esseen bounds of the Kolmogorov distance
for  discretized versions of minimum contrast and maximum likelihood
estimators of the drift parameter   of the Ornstein-Uhlenbeck
process, see Section \ref{sect-appl}. Finally, we mention that in a particular situation, the paper \cite{BBE} provided 
explicit upper bounds for the
Kolmogorov distance for the rate of convergence for the central limit theorem of
 the least squares estimator  
 of the drift parameter of continuously-observed Ornstein–Uhlenbeck processes driven by a Gaussian process with stationary increments.

Our paper is structured as follows: In Section \ref{prelim} we introduce the
necessary notions and notation and recall some important background
material in order to make the paper self-contained. Our main result
is the content of Section \ref{main-sect}. Finally, several applications are developed in
Section    \ref{sect-appl}. In particular, Section \ref{AMCE-sect} deals with an approximate minimum contrast estimator and  Section \ref{AMLE-sect} with approximate maximum likelihood estimators.

\section{Preliminaries}\label{prelim}
This section gives a brief overview of some useful facts from the
Malliavin calculus on Wiener space. Some of the results presented
here are essential for the proofs in the present paper. For our
purposes we focus on special cases that are relevant for our setting
and omit the general high-level theory. We direct the interested
reader to~\cite[Chapter 1]{nualart-book}and~\cite[Chapter
2]{NP-book}.

The first step is to identify the general centered Gaussian process
$(Z_t)_{\geq 0}$ with an   isonormal Gaussian process  $X = \{
X(h), h \in \mathcal{H}\}$ for some Hilbert space $\mathcal{H}$,
that is, $X$ is a centered Gaussian family defined a common
probability space  $(\Omega,
\mathcal{F}, \mathbb{P})$ satisfying, for every
$h_1, h_2 \in \mathcal{H}$,
$\mathbb{E} [ X(h_1) X(h_2) ] = \inner{h_1}{h_2}_\HC$.\\
One can define $\HC$ as the closure of real-valued step functions on
$[0, \infty)$ with respect to the inner product
$\inner{\stepid{t}}{\stepid{s}}_\HC = \mathbb{E}[ Z_t Z_s]$.   Note that
$X(\stepid{t})
\overset{law}{=} Z_t$.\\
The next step involves the  multiple Wiener-It\^o integrals.
The formal definition involves the concepts of Malliavin derivative
and divergence. We refer the reader to~\cite[Chapter
1]{nualart-book}and~\cite[Chapter 2]{NP-book}. For our purposes we
define the multiple Wiener-It\^o integral $I_p$ via the Hermite
polynomials $H_p$. In particular, for $h \in \HC$ with $\norm{h}_\HC
= 1$, and any $p \geq 1$,
\begin{align}
\notag H_p(X(h)) = I_p(h^{\otimes p}).
\end{align}
For $p = 1$ and $p = 2$ we have the following:
\begin{align}
\label{eq:z_rep} H_1(X(\stepid{t})) = & X(\stepid{t}) = I_1(\stepid{t}) \overset{d}{=} Z_t \\
\label{eq:var_rep} H_2(X (\stepid{t})) = & X(\stepid{t})^2 -
E[X(\stepid{t})^2] = I_2(\stepid{t}^{\otimes 2}) \overset{d}{=}
Z_t^2 - \mathbb{E}[Z_t]^2.
\end{align}
Note also that $I_0$ can be taken to be the identity operator.

\noindent $\bullet $ \textbf{Some notation for Hilbert spaces.} Let
$\HC$ be a Hilbert space. Given an integer $q \geq 2$ the Hilbert
spaces $\HC^{\otimes q}$ and $\HC^{\odot q}$ correspond to the $q$th
\emph{tensor product} and $q$th \emph{symmetric tensor product} of
$\HC$. If $f \in \HC^{\otimes q}$ is given by $f = \sum_{j_1,
\ldots, j_q} a(j_1, \ldots, j_q) e_{j_1} \otimes \cdots e_{j_q}$,
where $(e_{j_i})_{i \in [1, q]}$ form an orthonormal basis of
$\HC^{\otimes q}$, then the symmetrization $\tilde{f}$ is given by
\begin{align}
\notag \tilde{f} = \frac{1}{q!} \sum_{\sigma} \sum_{j_1, \ldots,
j_q} a(j_1, \ldots, j_q) e_{\sigma(j_1)} \otimes \cdots
e_{\sigma(j_q)},
\end{align}
where the first sum runs over all permutations  $\sigma$ of $\{1,
\ldots, q\}$. Then $\tilde{f}$ is an element of $\HC^{\odot q}$. We
also make use of the concept of contraction. The $r$th
\emph{contraction} of two tensor products $e_{j_1} \otimes \cdots
\otimes e_{j_p}$ and $e_{k_1} \otimes \cdots e_{k_q}$ is an element
of $\HC^{\otimes (p + q - 2r)}$ given by
\begin{align}
\notag (e_{j_1} & \otimes \cdots \otimes e_{j_p}) \otimes_r (e_{k_1} \otimes \cdots \otimes e_{k_q}) \\
\label{eq:contraction} =  & \quad \left[ \prod_{\ell =1}^r
\inner{e_{j_\ell}}{e_{k_\ell}} \right] e_{j_{r+1}} \otimes \cdots
\otimes e_{j_q} \otimes e_{k_{r+1}} \otimes \cdots \otimes e_{k_q}.
\end{align}

\noindent $\bullet $ \textbf{Isometry property of
integrals~\cite[Proposition 2.7.5]{NP-book}} Fix integers $p, q \geq
1$ as well as $f \in \HC^{\odot p}$ and $g \in \HC^{\odot q}$.
\begin{align}
\label{eq:isometry}  \mathbb{E} [ I_q(f) I_q(g) ] = \left\{
\begin{array}{ll} p! \inner{f}{g}_{\HC^{\otimes p}} & \mbox{ if } p
= q \\ 0 & \mbox{otherwise.} \end{array} \right.
\end{align}

\noindent $\bullet $ \textbf{Product formula~\cite[Proposition
2.7.10]{NP-book}} Let $p,q \geq 1$. If $f \in \HC^{\odot p}$ and $g
\in \HC^{\odot q}$ then
\begin{align}
\label{eq:product} I_p(f) I_q(g) = \sum_{r = 0}^{p \wedge q} r! {p
\choose r} {q \choose r} I_{p + q -2r}(f \widetilde{\otimes}_r g).
\end{align}

\noindent $\bullet $ \textbf{Hypercontractivity in Wiener Chaos.}
For every $q\geq 1$, ${\mathcal{H}}_{q}$ denotes the $q$th Wiener
chaos of $W$, defined as the closed linear subspace of $L^{2}(\Omega
)$ generated by the random variables $\{H_{q}(W(h)),h\in
{{\mathcal{H}}},\Vert h\Vert _{{\mathcal{H}}}=1\}$ where $H_{q}$ is
the $q$th Hermite polynomial. For any $F \in
\oplus_{l=1}^{q}{\mathcal{H}}_{l}$ (i.e. in a fixed sum of Wiener
chaoses), we have
\begin{equation}
\left( \mathbb{E}\big[|F|^{p}\big]\right) ^{1/p}\leqslant c_{p,q}\left(
\mathbb{E}\big[|F|^{2}\big]\right) ^{1/2}\ \mbox{ for any }p\geq 2.
\label{hypercontractivity}
\end{equation}
It should be noted that the constants $c_{p,q}$ above are known with
some precision when $F$ is a single chaos term: indeed,
by~\cite[Corollary 2.8.14]{NP-book}, $c_{p,q}=\left( p-1\right)
^{q/2}$.

\noindent $\bullet $ \textbf{Optimal fourth moment theorem}. Let
$\mathcal{N}$ denote the standard normal law.
Let a sequence $X_{n}\in {\mathcal{H}}_{q}$, such that $\mathbb{E}
X_{n}=0$ and $Var\left[ X_{n}\right] =1$ , and assume $X_{n}$
converges to a normal law in distribution, which is equivalent to
$\lim_{n}\mathbb{E}\left[ X_{n}^{4}\right] =3$. Then we have the  optimal
estimate for total variation distance $d_{TV}\left(
X_n,\mathcal{N}\right) $, known as the optimal 4th moment theorem,
proved in \cite{NP2015} as follows:
 There exist two constants
$c,C>0$ depending only on the sequence $X$ but not on $n$, such that
\begin{equation*}
c\max \left\{ \mathbb{E}\left[ X_{n}^{4}\right] -3,\left\vert \mathbb{E}
\left[ X_{n}^{3}\right] \right\vert \right\} \leqslant d_{TV}\left(
X_n,\mathcal{N}\right) \leqslant C\max \left\{ \mathbb{E}\left[
X_{n}^{4}\right] -3,\left\vert \mathbb{E}\left[ X_{n}^{3}\right] \right\vert
\right\}.
\end{equation*}
On the other hand, since $d_{Kol}\left(.,.\right)\leq d_{TV}\left(.,.\right)$, we have 
\begin{equation}
  d_{Kol}\left(
X_n,\mathcal{N}\right) \leqslant C\max \left\{ \mathbb{E}\left[
X_{n}^{4}\right] -3,\left\vert \mathbb{E}\left[ X_{n}^{3}\right] \right\vert
\right\}.\label{fourth-cumulant-thm}
\end{equation}
 Moreover, we recall that   the third and fourth
cumulants are respectively
$$
\begin{aligned}
&\kappa_{3}(X)=\mathbb{E}\left[X^{3}\right]-3 \mathbb{E}\left[X^{2}\right] \mathbb{E}[X]+2 \mathbb{E}[X]^{3}, \\
&\kappa_{4}(X)=\mathbb{E}\left[X^{4}\right]-4 \mathbb{E}[X] \mathbb{E}\left[X^{3}\right]-3
\mathbb{E}\left[X^{2}\right]^{2}+12 \mathbb{E}[X]^{2} \mathbb{E}\left[X^{2}\right]-6 \mathbb{E}[X]^{4}.
\end{aligned}
$$
In particular, when $\mathbb{E}[X]=0$, we have that
\[\kappa_{3}(X)=\mathbb{E}\left[X^{3}\right]\ \mbox{ and }\ \kappa_{4}(X)=
\mathbb{E}\left[X^{4}\right]-3 \mathbb{E}\left[X^{2}\right]^{2}.\]

Throughout the paper we use the notation $\mathcal{N} \sim
\mathcal{N}(0,1)$. We also use the notation $C$ for any positive
real constant, independently of its value which may change from line
to line when this does not lead to ambiguity.\\

\section{Main result}\label{main-sect}

In this section we present in detail our main result, which is
stated in full generality in the forthcoming Theorem \ref{main-thm}.

 Let us start with the following assumptions.

\noindent \textbf{Assumption} $\mathbf{\left(\mathcal{A}_1\right)}$:
Let $q$ be a fixed positive integer, and let $\{G_T,T>0\}$ be
a  stochastic process satisfying
\begin{eqnarray}
\left|\frac{1}{\rho \sqrt{T}}\mathbb{E}G_T-1\right|\rightarrow0,\mbox{ as } T\rightarrow\infty,\qquad
\mathbb{E}\left[(G_T-\mathbb{E}G_T)^2\right]\longrightarrow\sigma^2\
\mbox{ as } T\rightarrow\infty\label{estimate-EG_T}
\end{eqnarray}
for some positive constants $\rho>0,\ \sigma>0$, and
\begin{eqnarray}&G_T-\mathbb{E}G_T=V_T+\frac{1}{\sqrt{T}}R_T,\quad T>0,\label{relation-D-V-R}
\end{eqnarray}
where  \begin{eqnarray}V_T,R_T\in{\mathcal{H}}_{q},\ T>0, \mbox{ such that  } \frac{\|R_T\|_{L^2(\Omega)}}{\sqrt{T}}\rightarrow0 \mbox{ as } T\rightarrow\infty.\label{condition-V-R}\end{eqnarray}
\noindent \textbf{Assumption} $\mathbf{\left(\mathcal{A}_2\right)}$: Let   $\{(A_T,a_T),T>0\}$ be
a  stochastic process satisfying
\begin{eqnarray}  A_T \in {\mathcal{H}}_{q},
\mbox{ and } a_T\in \mathbb{R} \mbox{ such that  } \frac{\|A_T\|_{L^2(\Omega)}+|a_T|}{\sqrt{T}}\rightarrow0 \mbox{ as } T\rightarrow\infty.\label{condition-A-a}\end{eqnarray}
 Our purpose is to
provide an upper bound
 in Kolmogorov distance for normal approximation of
the ratio
\begin{eqnarray}\frac{\frac{1}{\sigma}(G_T-\mathbb{E}G_T)+\frac{1}{\sqrt{T}}\left(A_T+a_T\right)}{\frac{1}{\rho
\sqrt{T}}G_{T}},\quad T>0,\label{def-Q}\end{eqnarray} where $\{G_T,T>0\}$  and $\{(A_T,a_T),T>0\}$
  are stochastic processes  satisfying $\left(\mathcal{A}_1\right)$ and $\mathbf{\left(\mathcal{A}_2\right)}$, respectively. \\
 For every $z \in \mathbb{R}$,
$T>0$, let
$$
H_T(z):=\frac{ \left(\frac{1}{\sigma}-\frac{z}{\rho
\sqrt{T}}\right)(G_T-\mathbb{E}G_T)+\frac{1}{\sqrt{T}}\left(A_T+a_T\right)}{\frac{1}{\rho
\sqrt{T}}\mathbb{E}G_T}.
$$
Setting
\begin{eqnarray}M_T(z):=
\left(\frac{1}{\sigma}-\frac{z}{\rho
\sqrt{T}}\right)(G_T-\mathbb{E}G_T)+\frac{1}{\sqrt{T}}A_T,\quad z
\in \mathbb{R},\ T>0, \label{exp-M}\end{eqnarray} we have
\begin{eqnarray*}H_T(z)=\frac{M_T(z)+\frac{1}{\sqrt{T}} a_T}{\frac{1}{\rho \sqrt{T}}\mathbb{E}G_T},\quad z \in \mathbb{R},\
T>0.\label{exp-H}\end{eqnarray*} Then, for every $z \in \mathbb{R}$,
$T>0$,
\begin{eqnarray}\left\{\frac{\frac{1}{\sigma}(G_T-\mathbb{E}G_T)+\frac{1}{\sqrt{T}}\left(A_T+a_T\right)}{\frac{1}{\rho
\sqrt{T}}G_{T}} \leq z\right\}=\{H_T(z) \leq
z\}.\label{link-Q-H}\end{eqnarray}

\begin{theorem}\label{main-thm} Let $\{G_T,T>0\}$  and $\{(A_T,a_T),T>0\}$
  are stochastic processes  satisfying $\left(\mathcal{A}_1\right)$ and $\mathbf{\left(\mathcal{A}_2\right)}$, respectively. Then, there exist
a constant $C>0$ (independent of $T$) such that, for all $T>0$,
\begin{eqnarray}
&&d_{Kol}\left(\frac{\frac{1}{\sigma}(G_T-\mathbb{E}G_T)+\frac{1}{\sqrt{T}}\left(A_T+a_T\right)}{\frac{1}{\rho
\sqrt{T}}G_{T}},  \mathcal{N}  \right)\nonumber\\&\leq&
\max\left(\left|\kappa_{3}\left(\frac{V_T}{\sigma}\right)\right|, \kappa_{4}\left(\frac{V_T}{\sigma}\right) \right)
+C{T^{\frac14}}\left|\frac{1}{\rho
\sqrt{T}}\mathbb{E}G_T-1\right|+ C\left| \mathbb{E}[( G_T-\mathbb{E} G_T)^2] -
\sigma^2\right|\nonumber\\&&+\frac{C}{\sqrt{T}}\left(\|R_T\|_{L^2(\Omega)}+\|A_T\|_{L^2(\Omega)}+|a_T|\right),\label{kol-bound-F}
\end{eqnarray}
where the constants  $\rho$ and $\sigma$ are defined by
\eqref{estimate-EG_T}, and $\{V_T,T>0\}$ and $\{R_T,T>0\}$ are  the processes  given
by \eqref{relation-D-V-R}.
\end{theorem}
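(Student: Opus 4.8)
The plan is to build the argument on the event identity \eqref{link-Q-H}, which turns the left-hand side of \eqref{kol-bound-F} into $\sup_{z\in\mathbb{R}}\big|\mathbb{P}\{H_T(z)\le z\}-\mathbb{P}\{\mathcal N\le z\}\big|$, a supremum over the whole family $\{H_T(z)\}_z$ tested at the matching level $z$. Writing $\alpha_T:=\frac{1}{\rho\sqrt T}\mathbb{E}G_T$ (so $\alpha_T\to1$ by \eqref{estimate-EG_T}) and using $H_T(z)=\big(M_T(z)+\frac{a_T}{\sqrt T}\big)/\alpha_T$, clearing the deterministic denominator gives
\[
\{H_T(z)\le z\}=\Big\{\,M_T(z)\le z\alpha_T-\tfrac{a_T}{\sqrt T}\,\Big\}.
\]
Substituting \eqref{relation-D-V-R} into the definition \eqref{exp-M} of $M_T(z)$ yields $M_T(z)=\beta_T(z)\frac{V_T}{\sigma}+P_T(z)$ with $\beta_T(z):=1-\frac{z\sigma}{\rho\sqrt T}$ and $P_T(z):=\frac{R_T}{\sigma\sqrt T}-\frac{zR_T}{\rho T}+\frac{A_T}{\sqrt T}$. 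The decisive structural observation is that, by $\left(\mathcal A_1\right)$ and $\left(\mathcal A_2\right)$, the variable $M_T(z)$ lives in the single Wiener chaos $\mathcal H_q$, the constant $a_T$ of \eqref{condition-A-a} having been absorbed into the deterministic level. Hence $\widehat M_T(z):=M_T(z)/\norm{M_T(z)}_{L^2(\Omega)}$ is a unit-variance element of $\mathcal H_q$ to which the optimal fourth moment theorem \eqref{fourth-cumulant-thm} applies, and the remainders $R_T,A_T$ enter \emph{only} through $\norm{M_T(z)}_{L^2(\Omega)}$ and through the cumulants of $\widehat M_T(z)$, where they will produce errors linear in $\frac{\norm{R_T}_{L^2(\Omega)}+\norm{A_T}_{L^2(\Omega)}}{\sqrt T}$.

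Next I would split $\sup_z$ at the threshold $|z|=T^{1/4}$. On the tail range $|z|>T^{1/4}$ both $\mathbb{P}\{\mathcal N\le z\}$ and $\mathbb{P}\{H_T(z)\le z\}$ are within a negligible amount of $0$ or $1$: the Gaussian tail is of order $e^{-\sqrt T/2}$, and for the ratio I would revert to the quotient $\frac{N_T}{\frac{1}{\rho\sqrt T}G_T}$ with $N_T:=\frac1\sigma(G_T-\mathbb{E}G_T)+\frac1{\sqrt T}(A_T+a_T)$, controlling the numerator (bounded in $L^2(\Omega)$) against the denominator, which concentrates at $1$ by \eqref{estimate-EG_T} and \eqref{condition-V-R}. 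Since $N_T$ lies in a fixed sum of chaoses, hypercontractivity \eqref{hypercontractivity} gives $\mathbb{P}\{|N_T|>\tfrac12T^{1/4}\}\le C_p\,T^{-p/4}$ for every $p\ge2$, so the tail range contributes a super-polynomially small quantity, negligible against the right-hand side of \eqref{kol-bound-F}. The value $T^{1/4}$ of the cut-off is then dictated by balancing this tail estimate against the loss incurred on the central range.

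On the central range $|z|\le T^{1/4}$ I would, for each fixed $z$, replace $\mathbb{P}\{\widehat M_T(z)\le\tau_T(z)\}$, with $\tau_T(z):=\big(z\alpha_T-\frac{a_T}{\sqrt T}\big)/\norm{M_T(z)}_{L^2(\Omega)}$, by $\Phi(\tau_T(z))$ (with $\Phi$ the distribution function of $\mathcal N$) at the cost of $d_{Kol}(\widehat M_T(z),\mathcal N)\le C\max\big(|\kappa_3(\widehat M_T(z))|,\kappa_4(\widehat M_T(z))\big)$ from \eqref{fourth-cumulant-thm}, and then compare $\Phi(\tau_T(z))$ with $\Phi(z)$. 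Writing $v_T:=\mathbb{E}[V_T^2]$ and $\norm{M_T(z)}_{L^2(\Omega)}=\beta_T(z)\frac{\sqrt{v_T}}{\sigma}+O\!\big(\frac{\norm{R_T}+\norm{A_T}}{\sqrt T}\big)$, the discrepancy $\tau_T(z)-z$ separates into a term $z(\alpha_T-1)$, a term $z\big(1-\frac{\sqrt{v_T}}{\sigma}\big)$, a quadratic term $\frac{z^2\sqrt{v_T}}{\rho\sqrt T}$, a mean term $\frac{a_T}{\sqrt T}$, and remainders of size $\frac{\norm{R_T}+\norm{A_T}}{\sqrt T}$. Bounding the first crudely by $\frac{1}{\sqrt{2\pi}}T^{1/4}|\alpha_T-1|$ on $|z|\le T^{1/4}$ produces the term $C\,T^{1/4}\big|\frac{1}{\rho\sqrt T}\mathbb{E}G_T-1\big|$; for the scaling term, the quadratic term, and the $z$-weighted remainders I would instead exploit the Gaussian density decay $\sup_{z}|z|^k\varphi(z)<\infty$ ($k=1,2$), which converts the factors $z,z^2$ into bounded constants and yields $C|v_T-\sigma^2|$, a negligible $O(T^{-1/2})$, and $C\frac{\norm{R_T}+\norm{A_T}}{\sqrt T}$ respectively; the mean term gives $C\frac{|a_T|}{\sqrt T}$. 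Finally $|v_T-\sigma^2|$ is converted into $C\big|\mathbb{E}[(G_T-\mathbb{E}G_T)^2]-\sigma^2\big|+\frac{C}{\sqrt T}\norm{R_T}_{L^2(\Omega)}$ through \eqref{relation-D-V-R}--\eqref{condition-V-R}, matching the terms of \eqref{kol-bound-F}.

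It remains to turn the fourth-moment quantity into $\max\big(|\kappa_3(V_T/\sigma)|,\kappa_4(V_T/\sigma)\big)$. Since the $\beta_T(z)$ factors cancel in the normalization, one has $\widehat M_T(z)=\frac{V_T}{\sqrt{v_T}}+E_T(z)$ with $\norm{E_T(z)}_{L^2(\Omega)}=O\big(\frac{\norm{R_T}_{L^2(\Omega)}+\norm{A_T}_{L^2(\Omega)}}{\sqrt T}\big)$ uniformly on $|z|\le T^{1/4}$, and I would use hypercontractivity \eqref{hypercontractivity} together with the multilinearity of the third and fourth cumulants to show that $\kappa_3(\widehat M_T(z))$ and $\kappa_4(\widehat M_T(z))$ differ from $\kappa_3(V_T/\sqrt{v_T})$ and $\kappa_4(V_T/\sqrt{v_T})$ only by quantities already present on the right of \eqref{kol-bound-F}, and then pass from the $v_T$- to the $\sigma$-normalization by the homogeneity $\kappa_j(cX)=c^j\kappa_j(X)$, the factor $(\sigma/\sqrt{v_T})^j$ being bounded. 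Carrying out the symmetric lower bound in the same way and collecting the four contributions gives \eqref{kol-bound-F}. The hardest part, and the reason the abstract advertises \emph{sharp estimates for cumulants}, is precisely this step together with the uniform-in-$z$ bookkeeping of the previous paragraph: establishing quantitative, uniform continuity of the third and fourth cumulants of the perturbed chaos variable $\widehat M_T(z)$, so that neither the perturbation $E_T(z)$ nor the rescaling $\beta_T(z)$ degrades the cumulant bound beyond the four stated error terms.
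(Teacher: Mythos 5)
Your proposal follows essentially the same route as the paper's proof: the event identity \eqref{link-Q-H} converting the ratio into the $z$-dependent single-chaos element $M_T(z)$, the split of the supremum at $|z|\asymp T^{1/4}$, the optimal fourth moment theorem \eqref{fourth-cumulant-thm} applied to the normalized $M_T(z)$ together with the Gaussian-density estimates \eqref{diff-proba-estimate}--\eqref{sup-z-exp} for the threshold comparison (producing the $T^{1/4}\left|\frac{1}{\rho\sqrt{T}}\mathbb{E}G_T-1\right|$ and variance terms), and the hypercontractive transfer of the third and fourth cumulants from $M_T(z)$ to $V_T/\sigma$ at cost $\frac{C}{\sqrt{T}}\left(\|R_T\|_{L^2(\Omega)}+\|A_T\|_{L^2(\Omega)}+|a_T|\right)$. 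The only deviations are cosmetic: you dispose of the range $|z|>T^{1/4}$ by hypercontractive tail bounds on the numerator and concentration of the denominator where the paper reduces to the boundary value $z=T^{1/4}/\sigma$ by monotonicity, and you treat $z<0$ directly within the central range rather than via the reflection $Q_T\mapsto -Q_T$ used in \eqref{kol-case-z<0}.
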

\begin{proof}
Denote
\begin{eqnarray*}Q_T:=\frac{\frac{1}{\sigma}(G_T-\mathbb{E}G_T)+\frac{1}{\sqrt{T}}\left(A_T+a_T\right)}{\frac{1}{\rho
\sqrt{T}}G_{T}},\quad T>0.\end{eqnarray*}
 If $z<0$,
\begin{eqnarray}
\left|\mathbb{P}\left\{Q_T \leq z\right\}-\mathbb{P}\{\mathcal{N}
\leq z\}\right|&=& \left|\mathbb{P}\left\{- Q_T \geq
-z\right\}-\mathbb{P}\{\mathcal{N} \geq -z\}\right|\nonumber\\&=&
\left|\mathbb{P}\left\{-Q_T \leq -z\right\}-\mathbb{P}\{\mathcal{N}
\leq -z\}\right|.\label{kol-case-z<0}
\end{eqnarray}
Now, let $T>\frac{1}{\rho ^4}$ and
$0<z\leq\frac{T^{\frac14}}{\sigma}$. In this case,  $\frac{1}{\sigma}-\frac{z}{\rho \sqrt{T}}>0$. By \eqref{estimate-EG_T},
\eqref{condition-A-a} and \eqref{exp-M}, we get
\begin{eqnarray}\mathbb{E}\left[M_T(z)^2\right]&=& \left(\frac{1}{\sigma}-\frac{z}{\rho \sqrt{T}}\right)^2
\mathbb{E}\left[(G_T-\mathbb{E}G_T)^2\right]+\frac{1}{T}\mathbb{E}\left[\left(A_T+a_T\right)^2\right]
\nonumber\\&&+2\left(\frac{1}{\sigma}-\frac{z}{\rho \sqrt{T}}\right)
\frac{\mathbb{E}\left[\left(A_T+a_T\right)(G_T-\mathbb{E}G_T)\right]}{\sqrt{T}}\label{decomp-K}\\
&\geq& \frac{1}{\sigma^2}\left(1-\frac{1}{\rho T^{\frac14}}\right)^2
\mathbb{E}\left[(G_T-\mathbb{E}G_T)^2\right]+\frac{1}{T}\mathbb{E}\left[\left(A_T+a_T\right)^2\right]
\nonumber\\&&-2 \frac{1}{\sigma}\left|1-\frac{1}{\rho
T^{\frac14}}\right|
\frac{\left|\mathbb{E}\left[\left(A_T+a_T\right)(G_T-\mathbb{E}G_T)\right]\right|}{\sqrt{T}}\nonumber\\
&=:&\eta_T\longrightarrow 1\quad \mbox{ as }
T\rightarrow\infty.\nonumber
\end{eqnarray}
This  implies
\begin{eqnarray}c_2>\sup_{T>T_0,\ 0<z\leq\frac{T^{\frac14}}{\sigma}}\mathbb{E}\left[M_T(z)^2\right]\geq
 \sup_{T>T_0} \eta_T>c_1>0 \label{sup of M}\end{eqnarray}
for some $T_0>\frac{1}{\rho ^4}$.\\
On the other hand, using \eqref{link-Q-H},
\begin{eqnarray}
 &&\left|\mathbb{P}\left\{Q_T \leq z\right\}-\mathbb{P}\{\mathcal{N}
\leq z\}\right|=\left|\mathbb{P}\left\{H_T(z) \leq
z\right\}-\mathbb{P}\{\mathcal{N} \leq
z\}\right|\nonumber\\&\leq&\left|\mathbb{P}\left\{\frac{M_T\left(z\right)}{\sqrt{\mathbb{E}M_T\left(z\right)^2}}
\leq \frac{z\frac{1}{\rho
\sqrt{T}}\mathbb{E}G_T-\frac{1}{\sqrt{T}}a_T}{\sqrt{\mathbb{E}M_T\left(z\right)^2}}\right\}
-\mathbb{P}\left\{\mathcal{N} \leq \frac{z\frac{1}{\rho
\sqrt{T}}\mathbb{E}G_T-\frac{1}{\sqrt{T}}a_T}{\sqrt{\mathbb{E}M_T\left(z\right)^2}}\right\}\right|\nonumber\\&&+
\left|\mathbb{P}\left\{\mathcal{N}\leq \frac{z\frac{1}{\rho
\sqrt{T}}\mathbb{E}G_T-\frac{1}{\sqrt{T}}a_T}{\sqrt{\mathbb{E}M_T\left(z\right)^2}}\right\}
-\mathbb{P}\left\{\mathcal{N} \leq \frac{z\frac{1}{\rho
\sqrt{T}}\mathbb{E}G_T}{\sqrt{\mathbb{E}M_T\left(z\right)^2}}\right\}\right|\nonumber\\&&+
\left|\mathbb{P}\left\{\mathcal{N}\leq \frac{z\frac{1}{\rho
\sqrt{T}}\mathbb{E}G_T}{\sqrt{\mathbb{E}M_T\left(z\right)^2}}\right\}
-\mathbb{P}\{\mathcal{N} \leq z\}\right|.\label{triang.-kol}
\end{eqnarray}
Furthermore, using  \eqref{estimate-EG_T}, \eqref{condition-A-a},
\eqref{sup of M}, \eqref{triang.-kol}  and the fact that
\begin{eqnarray}\left|\mathbb{P}\left\{\mathcal{N} \leq
x\right\}-\mathbb{P}\left\{\mathcal{N} \leq
y\right\}\right|\leq\frac{|x-y|}{\sqrt{2\pi}}e^{-\frac{\min(x^2,y^2)}{2}},\quad
x,y \mbox{ in } \mathbb{R},\label{diff-proba-estimate}\end{eqnarray}
and
\begin{eqnarray}\sup_{\{z>0\}}|z|e^{-C z^2}<\infty,\qquad
 \sup_{\{z>0\}}z^2e^{-C z^2}<\infty,\label{sup-z-exp}
\end{eqnarray}
 we obtain, for
every $0<z\leq\frac{T^{\frac14}}{\sigma},\ T>T_0$,
\begin{eqnarray}
 &&\left|\mathbb{P}\left\{Q_T \leq
 z\right\}-\mathbb{P}\{\mathcal{N} \leq z\}\right| \nonumber\\&\leq&
d_{{Kol}}\left(\frac{M_T\left(z\right)}{\sqrt{\mathbb{E}M_T\left(z\right)^2}}
,\mathcal{N}\right)+\frac{1}{\sqrt{2\pi}}\left|\frac{\frac{1}{\sqrt{T}}a_T}{\sqrt{\mathbb{E}M_T\left(z\right)^2}}\right|\nonumber\\
&&+\frac{|z|}{\sqrt{2\pi}}\left|\frac{\frac{1}{\rho
\sqrt{T}}\mathbb{E}G_T}{\sqrt{\mathbb{E}M_T\left(z\right)^2}}-1\right|
e^{-C z^2 \min\left(1,\frac{\frac{1}{\rho
\sqrt{T}}\mathbb{E}G_T}{\sqrt{\mathbb{E}M_T\left(z\right)^2}}\right)^2}\nonumber\\
&\leq&
d_{{Kol}}\left(\frac{M_T\left(z\right)}{\sqrt{\mathbb{E}M_T\left(z\right)^2}}
,\mathcal{N}\right)+C{T^{\frac14}}\left|\frac{1}{\rho
\sqrt{T}}\mathbb{E}G_T-1\right|+ C\left| \mathbb{E}[( G_T-\mathbb{E} G_T)^2] -
\sigma^2\right|\nonumber\\&&+\frac{C}{\sqrt{T}}\left(\|A_T\|_{L^2(\Omega)}+|a_T|\right),\label{{Kol}-F-z<}
\end{eqnarray}
where we   used, according to \eqref{condition-A-a} and \eqref{sup
of M},
\[
  \frac{1}{\sqrt{2\pi}}\left|\frac{\frac{1}{\sqrt{T}}a_T}{\sqrt{\mathbb{E}M_T\left(z\right)^2}}\right|
\leq \frac{C|a_T|}{\sqrt{T}},
\]
and, by \eqref{estimate-EG_T}, \eqref{decomp-K}, \eqref{sup of M}
and \eqref{sup-z-exp},
\begin{eqnarray*}&&\frac{|z|}{\sqrt{2\pi}}\left|\frac{\frac{1}{\rho
\sqrt{T}}\mathbb{E}G_T}{\sqrt{\mathbb{E}M_T\left(z\right)^2}}-1\right|
e^{-C z^2 \min\left(1,\frac{\frac{1}{\rho
\sqrt{T}}\mathbb{E}G_T}{\sqrt{\mathbb{E}M_T\left(z\right)^2}}\right)^2}\\&\leq&
\frac{|z|}{\sqrt{2\pi}}\left|\frac{\frac{1}{\rho
\sqrt{T}}\mathbb{E}G_T-1}{\sqrt{\mathbb{E}M_T\left(z\right)^2}}\right|
e^{-C z^2 \min\left(1,\frac{\frac{1}{\rho
\sqrt{T}}\mathbb{E}G_T}{\sqrt{\mathbb{E}M_T\left(z\right)^2}}\right)^2}
\\&&+
\frac{|z|}{\sqrt{2\pi}}\left|\frac{1}{\sqrt{\mathbb{E}M_T\left(z\right)^2}}-1\right|
e^{-C \left(\frac{z}{\sqrt{\mathbb{E}M_T\left(z\right)^2}}\right)^2
\min\left(\sqrt{\eta_T}, \frac{1}{\rho \sqrt{T}}\mathbb{E}G_T
\right)^2}
\\&\leq& C  |z|\left|\frac{1}{\rho
\sqrt{T}}\mathbb{E}G_T-1\right| + C\frac{|z|}{\sqrt{\mathbb{E}M_T\left(z\right)^2}}\left| \mathbb{E}M_T\left(z\right)^2 -1\right|
e^{-C \left(\frac{z}{\sqrt{\mathbb{E}M_T\left(z\right)^2}}\right)^2}
\\&\leq&C{T^{\frac14}}\left|\frac{1}{\rho
\sqrt{T}}\mathbb{E}G_T-1\right| +  C\left| \mathbb{E}[( G_T-\mathbb{E} G_T)^2] -
\sigma^2\right|+\frac{C}{\sqrt{T}}\left(\|A_T\|_{L^2(\Omega)}+|a_T|\right).\end{eqnarray*}
 On the other
hand, using  the binomial expansion, \eqref{estimate-EG_T}--\eqref{condition-A-a} and \eqref{exp-M}, we
get, for every $0<z\leq\frac{T^{\frac14}}{\sigma},\ T>T_0$,
\begin{eqnarray}
\kappa_{4}(M_T(z))&=&\left|\kappa_{4}(M_T(z))\right|=\left|\mathbb{E}(M_T(z)^4)-3(\mathbb{E}(M_T(z)^2))^2\right|\nonumber\\
&\leq&\left|\frac{1}{\sigma}-\frac{z}{\rho
\sqrt{T}}\right|^4\left|\mathbb{E}((G_T-\mathbb{E}G_T)^4)-3(\mathbb{E}((G_T-\mathbb{E}G_T)^2))^2\right|
+\frac{C}{\sqrt{T}}\left(\|A_T\|_{L^2(\Omega)}+|a_T|\right)\nonumber\\
&\leq&\left|\kappa_{4}(\frac{1}{\sigma}(G_T-\mathbb{E}G_T))\right|+\frac{C}{\sqrt{T}}\left(\|A_T\|_{L^2(\Omega)}+|a_T|\right)\nonumber\\
&\leq&\kappa_{4}\left(\frac{V_T}{\sigma}\right)+\frac{C}{\sqrt{T}}\left(\|R_T\|_{L^2(\Omega)}+\|A_T\|_{L^2(\Omega)}+|a_T|\right).\label{4cumulant-F-A}
\end{eqnarray}
Similarly,  for every $0<z\leq\frac{T^{\frac14}}{\sigma},\ T>T_0$,
\begin{eqnarray}
\left|\kappa_{3}(M_T(z))\right|&=&\left|\mathbb{E}(M_T(z)^3)\right|\nonumber\\
&\leq&\left|\frac{1}{\sigma}-\frac{z}{\rho
\sqrt{T}}\right|^3\left|\mathbb{E}((G_T-\mathbb{E}G_T)^3)\right|
+\frac{C}{\sqrt{T}}\left(\|A_T\|_{L^2(\Omega)}+|a_T|\right)\nonumber\\
&\leq&\left|\kappa_{3}(\frac{1}{\sigma}(G_T-\mathbb{E}G_T))\right|+\frac{C}{\sqrt{T}}\left(\|A_T\|_{L^2(\Omega)}+|a_T|\right)\nonumber\\
&\leq&\left|\kappa_{3}\left(\frac{V_T}{\sigma}\right)\right|+\frac{C}{\sqrt{T}}\left(\|R_T\|_{L^2(\Omega)}+\|A_T\|_{L^2(\Omega)}+|a_T|\right).\label{3cumulant-F-A}
\end{eqnarray}
Combining \eqref{fourth-cumulant-thm}, \eqref{{Kol}-F-z<},
\eqref{4cumulant-F-A} and \eqref{3cumulant-F-A}, we deduce
\begin{eqnarray}
&&\sup_{\{0\leq
z\leq\frac{T^{\frac14}}{\sigma}\}}\left|\mathbb{P}\left\{Q_T \leq
z\right\}-\mathbb{P}\{\mathcal{N} \leq z\}\right|\nonumber\\&&\leq
\max\left(\left|\kappa_{3}\left(\frac{V_T}{\sigma}\right)\right|,\kappa_{4}\left(\frac{V_T}{\sigma}\right)\right)
+C{T^{\frac14}}\left|\frac{1}{\rho
\sqrt{T}}\mathbb{E}G_T-1\right|\nonumber\\&&
+ C\left| \mathbb{E}[( G_T-\mathbb{E} G_T)^2] -
\sigma^2\right| +\frac{C}{\sqrt{T}}\left(\|R_T\|_{L^2(\Omega)}+\|A_T\|_{L^2(\Omega)}+|a_T|\right).\label{{Kol}-caseII-z>}
\end{eqnarray}
Let $z> \frac{T^{\frac14}}{\sigma}$. By \eqref{link-Q-H}, and the
fact that  $\mathbb{P}\left\{\mathcal{N} \geq z\right\}\leq C
e^{-\frac{z^2}{4}}$ for all $z>0$, we have
\begin{eqnarray}
\left|\mathbb{P}\left\{Q_T \leq z\right\}-\mathbb{P}\{\mathcal{N}
\leq z\}\right| &\leq& \mathbb{P}\left\{Q_T
  \geq \frac{T^{\frac14}}{\sigma}\right\}+\mathbb{P}\left\{\mathcal{N} \geq \frac{T^{\frac14}}{\sigma}\right\}\nonumber \\
&\leq& \left|\mathbb{P}\left\{Q_T \leq
\frac{T^{\frac14}}{\sigma}\right\}-\mathbb{P}\left\{\mathcal{N} \leq
\frac{T^{\frac14}}{\sigma}\right\}\right|
+2 \mathbb{P}\left\{\mathcal{N} \geq \frac{T^{\frac14}}{\sigma}\right\}\nonumber \\
 &\leq& \left|\mathbb{P}\left\{H_T\left(\frac{T^{\frac14}}{\sigma}\right) \leq
\frac{T^{\frac14}}{\sigma}\right\}-\mathbb{P}\left\{\mathcal{N} \leq
\frac{T^{\frac14}}{\sigma}\right\}\right|+\frac{C}{e^{\frac{\sqrt{T}}{4\sigma^2}}}.\label{z-greater}
\end{eqnarray}
Moreover,
\begin{eqnarray*}
&& \left|\mathbb{P}\left\{H_T\left(\frac{T^{\frac14}}{\sigma}\right)
\leq \frac{T^{\frac14}}{\sigma}\right\}-\mathbb{P}\left\{\mathcal{N}
\leq \frac{T^{\frac14}}{\sigma}\right\}\right|\nonumber\\
&\leq&\left|\mathbb{P}\left\{\frac{M_T\left(\frac{T^{\frac14}}{\sigma}\right)}{\sqrt{\mathbb{E}\left[M_T\left(\frac{T^{\frac14}}{\sigma}\right)^2\right]}}
\leq \frac{\frac{1}{\sigma\rho
T^{\frac14}}\mathbb{E}G_T-\frac{1}{\sqrt{T}}a_T}{\sqrt{\mathbb{E}\left[M_T\left(\frac{T^{\frac14}}{\sigma}\right)^2\right]}}\right\}
-\mathbb{P}\left\{\mathcal{N} \leq \frac{\frac{1}{\sigma\rho
T^{\frac14}}G_T-\frac{1}{\sqrt{T}}a_T}{\sqrt{\mathbb{E}\left[M_T\left(\frac{T^{\frac14}}{\sigma}\right)^2\right]}}\right\}\right|\nonumber\\&&+
\left|\mathbb{P}\left\{\mathcal{N}\leq \frac{\frac{1}{\sigma\rho
T^{\frac14}}\mathbb{E}G_T-\frac{1}{\sqrt{T}}a_T}{\sqrt{\mathbb{E}\left[M_T\left(\frac{T^{\frac14}}{\sigma}\right)^2\right]}}\right\}
-\mathbb{P}\left\{\mathcal{N} \leq \frac{\frac{1}{\sigma\rho
T^{\frac14}}\mathbb{E}G_T}{\sqrt{\mathbb{E}\left[M_T\left(\frac{T^{\frac14}}{\sigma}\right)^2\right]}}\right\}\right|\nonumber\\&&+
\left|\mathbb{P}\left\{\mathcal{N}\leq \frac{\frac{1}{\sigma\rho
T^{\frac14}}\mathbb{E}G_T}{\sqrt{\mathbb{E}\left[M_T\left(\frac{T^{\frac14}}{\sigma}\right)^2\right]}}\right\}
-\mathbb{P}\left\{\mathcal{N} \leq
\frac{T^{\frac14}}{\sigma}\right\}\right|.
\end{eqnarray*}
Combining this fact with \eqref{estimate-EG_T},
\eqref{condition-A-a}, \eqref{sup of M} and
\eqref{diff-proba-estimate},
\begin{eqnarray}
&& \left|\mathbb{P}\left\{H_T\left(\frac{T^{\frac14}}{\sigma}\right)
\leq \frac{T^{\frac14}}{\sigma}\right\}-\mathbb{P}\left\{\mathcal{N}
\leq \frac{T^{\frac14}}{\sigma}\right\}\right|\nonumber\\
&\leq&
d_{{Kol}}\left(\frac{M_T\left(\frac{T^{\frac14}}{\sigma}\right)}{\sqrt{\mathbb{E}\left[M_T\left(\frac{T^{\frac14}}{\sigma}\right)^2\right]}}
,\mathcal{N}\right)+
C\left|\frac{\frac{1}{\sqrt{T}}a_T}{\sqrt{\mathbb{E}\left[M_T\left(\frac{T^{\frac14}}{\sigma}\right)^2\right]}}\right|
+  \left|\frac{\frac{\mathbb{E}G_{T}}{\rho
\sqrt{T}}}{\sqrt{\mathbb{E}\left[M_T\left(\frac{T^{\frac14}}{\sigma}\right)^2\right]}}
-1\right|\frac{T^{\frac14}e^{-C\frac{T^{\frac12}}{\sigma^2}}}{\sqrt{2\pi}\sigma}\nonumber  \\
&\leq&
d_{{Kol}}\left(\frac{M_T\left(\frac{T^{\frac14}}{\sigma}\right)}{\sqrt{\mathbb{E}\left[M_T\left(\frac{T^{\frac14}}{\sigma}\right)^2\right]}}
,\mathcal{N}\right)
 +\frac{C|a_T|}{\sqrt{T}}.\label{{Kol}-case-z>}
\end{eqnarray}
Thus, by \eqref{fourth-cumulant-thm}, \eqref{4cumulant-F-A},
\eqref{3cumulant-F-A}, \eqref{z-greater} and \eqref{{Kol}-case-z>}, we
obtain
\begin{eqnarray}
 \sup_{\{z>\frac{T^{\frac14}}{\sigma}\}}\left|\mathbb{P}\left\{Q_T
\leq z\right\}-\mathbb{P}\{\mathcal{N} \leq z\}\right| \leq
\max\left(\left|\kappa_{3}\left(\frac{V_T}{\sigma}\right)\right|,\kappa_{4}\left(\frac{V_T}{\sigma}\right)\right)
+\frac{C\left(\|R_T\|_{L^2(\Omega)}+|a_T|\right)}{\sqrt{T}}.\label{{Kol}-caseI-z>}
\end{eqnarray}
 Consequently, using \eqref{kol-case-z<0}, \eqref{{Kol}-caseII-z>} and \eqref{{Kol}-caseI-z>}, we deduce
\begin{eqnarray*}
&&\sup_{\{z\in\mathbb{R}\}}\left|\mathbb{P}\left\{Q_T \leq
z\right\}-\mathbb{P}\{\mathcal{N} \leq z\}\right|\nonumber\\&&\leq
\max\left(\left|\kappa_{3}\left(\frac{V_T}{\sigma}\right)\right|,\kappa_{4}\left(\frac{V_T}{\sigma}\right)\right)
+C{T^{\frac14}}\left|\frac{1}{\rho
\sqrt{T}}\mathbb{E}G_T-1\right|\nonumber\\&&
+ C\left| \mathbb{E}[( G_T-\mathbb{E} G_T)^2] -
\sigma^2\right| +\frac{C}{\sqrt{T}}\left(\|R_T\|_{L^2(\Omega)}+\|A_T\|_{L^2(\Omega)}+|a_T|\right),
\end{eqnarray*}
which proves \eqref{kol-bound-F}. Therefore the proof is complete.
\end{proof}

\section{Applications}\label{sect-appl}

We now apply our approach to   Ornstein-Uhlenbeck (OU) processes,
and we compare our findings  with the relevant results in the
literature.

Let $X:=\left\{X_t, t \geq 0\right\}$  be the OU process driven by a
Brownian motion $\left\{W_{t},t\geq 0\right\} $.  More precisely,
$X$ is the solution of the following linear stochastic differential
equation
\begin{equation}
X_{0}=0;\quad dX_{t}=-\theta X_{t}dt+dW_{t},\quad t\geq 0,
\label{INTRO-OU}
\end{equation}
where $\theta >0$ is an unknown parameter.\\
The drift parametric estimation for the OU process \eqref{INTRO-OU}
has been widely studied in the literature.
 There are several  methods that can estimate the parameter $\theta$
 in \eqref{INTRO-OU} such as maximum likelihood estimation, least squares
 estimation and  minimum
contrast estimation, we refer to  monographs \cite{kutoyants,LS}.
Here we consider  discretized versions of minimum contrast and
maximum likelihood estimators of the drift parameter $\theta$  of
the OU process \eqref{INTRO-OU}. We suppose that the process $X$
given by \eqref{INTRO-OU}, is observed equidistantly in time with
the step size $\Delta_n$ : $t_i=i \Delta_n, i=0, \cdots, n$, and
$T=n \Delta_n$ denotes the length of the ``observation window".

Let us now introduce the necessary notions and notation and recall
some important background material needed for the proofs in this
section.\\
 Since \eqref{INTRO-OU} is
linear, it is immediate to see that its solution can be expressed
  explicitly as
\begin{align}
X_{t}=\int_{0}^{t}e^{-\theta (t-s)}dW_{s}. \label{OUX}
\end{align}
Moreover, It is well-known  that
\begin{equation}
Z_{t}=\int_{-\infty }^{t}e^{-\theta (t-s)}dW_{s}\label{OUZ}
\end{equation}
 is a stationary Gaussian process, and
\begin{equation}
E[Z_{t}^2]= E[Z_{0}^2]=\frac{1}{2\theta},\quad t>0.\label{var-Z}
\end{equation}
 Furthermore,
 \begin{equation}
X_{t}=Z_{t}-e^{-\theta t}Z_0.\label{X-Z}
\end{equation}  
Let us introduce
\[F_n(Z):=\sqrt{T}\left(f_n(Z)-\frac{1}{2\theta}\right), \mbox{ where } f_n(Z) := \frac{1}{n} \sum_{i =0}^{n-1}
Z_{t_{i}}^{2}.\]
We will make use of the following  technical lemmas.
\begin{lemma} Let $X$ and $Z$ be the processes given in \eqref{OUX}
and \eqref{OUZ} respectively. Then there exists $C>0$ depending only
on $\theta$ such that  for every $p \geqslant 1$ and for all $n
\geq1$,
\begin{equation}
\left\|f_n(X)-f_n(Z)\right\|_{L^p(\Omega)}\leq
\frac{C}{n\Delta_n}.\label{error X-Z}
\end{equation}
\end{lemma}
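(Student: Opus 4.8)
The plan is to establish the $L^p$-bound by writing $f_n(X)-f_n(Z)$ explicitly and exploiting the relation \eqref{X-Z}, namely $X_t = Z_t - e^{-\theta t}Z_0$. First I would expand the difference of empirical second moments:
\begin{align}
\notag f_n(X)-f_n(Z) = \frac{1}{n}\sum_{i=0}^{n-1}\left(X_{t_i}^2 - Z_{t_i}^2\right).
\end{align}
Substituting $X_{t_i} = Z_{t_i} - e^{-\theta t_i}Z_0$ and using the algebraic identity $a^2-b^2=(a-b)(a+b)$ with $a=X_{t_i}$, $b=Z_{t_i}$, the $i$th summand becomes $-e^{-\theta t_i}Z_0\left(2Z_{t_i} - e^{-\theta t_i}Z_0\right) = -2e^{-\theta t_i}Z_0 Z_{t_i} + e^{-2\theta t_i}Z_0^2$. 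Thus the whole difference is a sum of two manageable pieces weighted by the geometrically decaying factors $e^{-\theta t_i}=e^{-\theta i\Delta_n}$ and $e^{-2\theta t_i}$.

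Next I would apply the triangle inequality in $L^p(\Omega)$ together with the Cauchy--Schwarz-type bound $\|Z_0 Z_{t_i}\|_{L^p(\Omega)}\leq \|Z_0\|_{L^{2p}(\Omega)}\|Z_{t_i}\|_{L^{2p}(\Omega)}$, and similarly $\|Z_0^2\|_{L^p(\Omega)}=\|Z_0\|_{L^{2p}(\Omega)}^2$. Since $Z$ is a stationary centered Gaussian process with $\mathbb{E}[Z_t^2]=\frac{1}{2\theta}$ by \eqref{var-Z}, all the moments $\|Z_{t_i}\|_{L^{2p}(\Omega)}$ are bounded by a constant depending only on $p$ and $\theta$, uniformly in $i$ (this is the Gaussian moment formula, or alternatively hypercontractivity \eqref{hypercontractivity} applied to the first chaos). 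Consequently each summand is bounded in $L^p(\Omega)$ by $C e^{-\theta i\Delta_n}$ plus a smaller $C e^{-2\theta i\Delta_n}$ term, where $C$ depends only on $p$ and $\theta$.

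The final and essentially only quantitative step is to sum the geometric factors:
\begin{align}
\notag \frac{1}{n}\sum_{i=0}^{n-1} e^{-\theta i\Delta_n} \leq \frac{1}{n}\cdot\frac{1}{1-e^{-\theta\Delta_n}}.
\end{align}
Here I would use the elementary inequality $1-e^{-x}\geq \frac{x}{1+x}\geq c\,x$ for $x$ in a bounded range, or more simply $1-e^{-\theta\Delta_n}\geq c\,\theta\Delta_n$ when $\Delta_n$ stays bounded, to conclude that $\frac{1}{1-e^{-\theta\Delta_n}}\leq \frac{C}{\Delta_n}$, giving the geometric sum a bound of order $\frac{1}{n\Delta_n}$. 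The second sum involving $e^{-2\theta i\Delta_n}$ is bounded the same way (and is of the same or smaller order), so combining both yields $\|f_n(X)-f_n(Z)\|_{L^p(\Omega)}\leq \frac{C}{n\Delta_n}$ with $C$ depending only on $\theta$ (after absorbing the $p$-dependence, or keeping it explicit if needed), which is exactly \eqref{error X-Z}.

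The main obstacle, though minor, is controlling the factor $\frac{1}{1-e^{-\theta\Delta_n}}$ uniformly: one must be careful that the bound $\frac{1}{1-e^{-\theta\Delta_n}}\leq \frac{C}{\Delta_n}$ holds with a constant independent of $n$, which requires $\theta\Delta_n$ to lie in a bounded set. This is guaranteed by the standing high-frequency assumption on the step size (typically $\Delta_n\to 0$ or at least $\Delta_n$ bounded), so the estimate is uniform in $n$ as claimed. Everything else is a routine application of Gaussian moment bounds and the triangle inequality.
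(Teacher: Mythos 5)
Your proposal is correct and follows essentially the same route as the paper: expand $X_{t_i}^2-Z_{t_i}^2$ via $X_t=Z_t-e^{-\theta t}Z_0$, bound each summand in $L^p$ using stationarity and Gaussian moment bounds, and sum the geometric series with $1-e^{-\theta\Delta_n}\asymp\theta\Delta_n$. Your remark about the $p$-dependence of the constant is a fair (minor) point that the paper glosses over, but it does not affect the validity of the argument.
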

\begin{proof}Using \eqref{X-Z}, we can write
\begin{eqnarray}
\left\|f_n(X)-f_n(Z)\right\|_{L^p(\Omega)}\leq \frac{1}{n} \sum_{i
=0}^{n-1}\left\|e^{-2\theta t_{i}}Z_{0}^{2}-2e^{-\theta
t_{i}}Z_{0}Z_{t_{i}}\right\|_{L^p(\Omega)}.\label{difference_fX-fZ}
\end{eqnarray}
Since $Z$ is a stationary Gaussian process, then, using
\eqref{difference_fX-fZ} and
$\frac{\Delta_n}{1-e^{-\theta\Delta_n}}\rightarrow\frac{1}{\theta}$
as $n\rightarrow\infty$, we obtain
\begin{eqnarray*}
\left\|f_n(X)-f_n(Z)\right\|_{L^p(\Omega)}&\leq& \frac{C}{n} \sum_{i
=0}^{n-1} e^{-\theta t_{i}}  = \frac{C}{n}
\frac{1-e^{-n\theta\Delta_n}}{1-e^{-\theta\Delta_n}}  \leq
\frac{C}{n\Delta_n},
\end{eqnarray*}
which completes the proof.\end{proof}

\begin{lemma}[\cite{EAA2023}] Let $X$ and $Z$ be the processes given in \eqref{OUX}
and \eqref{OUZ} respectively.
There exists $C>0$ depending only on $\theta$ such that for large
$n$
\begin{align}
\left|E\left(F_n^2(Z)\right)-\frac{1}{2\theta^3}\right|&\leq
C\left(\Delta_n^2+\frac{1}{n\Delta_n}\right).\label{variance-F(Z)}
\end{align}
Consequently, using \eqref{error X-Z}, for large $n$
\begin{align}
\left|E\left(F_n^2(X)\right)-\frac{1}{2\theta^3}\right|&\leq
C\left(\Delta_n^2+\frac{1}{n\Delta_n}\right).\label{variance-F(X)}
\end{align}
\end{lemma}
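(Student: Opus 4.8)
The plan is to prove the two inequalities separately: the bound on $E(F_n^2(Z))$ by a direct second-moment computation, and the bound on $E(F_n^2(X))$ by comparison through \eqref{error X-Z}. For the process $Z$, since it is stationary with $\mathbb{E}[Z_{t_i}^2]=\frac{1}{2\theta}$, the variable $F_n(Z)$ is centered and $E(F_n^2(Z))=\mathrm{Var}(F_n(Z))$. Writing $F_n(Z)=\sqrt{\Delta_n/n}\sum_{i=0}^{n-1}(Z_{t_i}^2-\frac{1}{2\theta})$ and using the Gaussian identity $\mathrm{Cov}(Z_s^2,Z_t^2)=2r(s-t)^2$ with autocovariance $r(u)=\frac{1}{2\theta}e^{-\theta|u|}$ (from \eqref{var-Z}), I would group the resulting double sum by the lag $k=|i-j|$ to get
\[
E(F_n^2(Z))=\frac{\Delta_n}{2n\theta^2}\sum_{i,j=0}^{n-1}e^{-2\theta|i-j|\Delta_n}=\frac{\Delta_n}{2\theta^2}+\frac{\Delta_n}{n\theta^2}\sum_{k=1}^{n-1}(n-k)q^k,\qquad q:=e^{-2\theta\Delta_n}.
\]

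The decisive step is to split the weight $n-k$. The $n$-part combines with the diagonal term $\frac{\Delta_n}{2\theta^2}$ into $\frac{\Delta_n}{2\theta^2}\cdot\frac{1+q-2q^n}{1-q}$; using $\frac{1+q}{1-q}=\coth(\theta\Delta_n)$ and $\coth(x)=\frac1x+\frac x3+O(x^3)$ gives $\frac{\Delta_n}{2\theta^2}\cdot\frac{1+q}{1-q}=\frac{1}{2\theta^3}+\frac{\Delta_n^2}{6}+O(\Delta_n^4)$. This is where I expect the main subtlety: the first-order term in $\Delta_n$ cancels because $\Delta_n\coth(\theta\Delta_n)$ is even in $\Delta_n$, and this is precisely what yields the rate $\Delta_n^2$ instead of $\Delta_n$. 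The two remaining pieces are of lower order: the boundary contribution $-\frac{\Delta_n q^n}{\theta^2(1-q)}$ is $O(1/(n\Delta_n))$ since $\Delta_n/(1-q)=O(1)$ and $q^n=e^{-2\theta T}\le C/(n\Delta_n)$, while the $k$-part $-\frac{\Delta_n}{n\theta^2}\sum_{k\ge1}kq^k$ is $O(1/(n\Delta_n))$ because $\sum_{k\ge1}kq^k=q/(1-q)^2=O(\Delta_n^{-2})$. Collecting the three contributions gives $|E(F_n^2(Z))-\frac{1}{2\theta^3}|\le C(\Delta_n^2+\frac{1}{n\Delta_n})$.

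For $X$ I would expand $E(F_n^2(X))-E(F_n^2(Z))=\mathbb{E}[(F_n(X)-F_n(Z))^2]+2\,\mathbb{E}[(F_n(X)-F_n(Z))F_n(Z)]$. The squared term follows at once from \eqref{error X-Z}: as $F_n(X)-F_n(Z)=\sqrt{T}(f_n(X)-f_n(Z))$ and $T=n\Delta_n$, it is at most $T\|f_n(X)-f_n(Z)\|_{L^2}^2\le n\Delta_n\cdot C/(n\Delta_n)^2=C/(n\Delta_n)$. The cross term is the real obstacle. Writing $F_n(X)-F_n(Z)=\frac{\sqrt T}{n}\sum_i(e^{-2\theta t_i}Z_0^2-2e^{-\theta t_i}Z_0Z_{t_i})$ from \eqref{X-Z} and expanding the expectation by Wick's formula, each $(i,j)$ contribution is dominated by $Ce^{-2\theta\max(t_i,t_j)}$; since $\sum_{i,j}e^{-2\theta\max(t_i,t_j)}=\sum_{m}(2m+1)q^m=O(\Delta_n^{-2})$ and the prefactor equals $T/n^2=\Delta_n/n$, the cross term is $O(1/(n\Delta_n))$. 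I would emphasize that a naive Cauchy--Schwarz bound here gives only $\|F_n(X)-F_n(Z)\|_{L^2}\|F_n(Z)\|_{L^2}=O(1/\sqrt{n\Delta_n})$, which is too weak, so the explicit covariance computation exploiting both the geometric decay and the $\Delta_n/n$ prefactor is essential to reach the sharp rate. The triangle inequality then combines the two parts into the stated bound for $X$.
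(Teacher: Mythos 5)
The paper does not prove this lemma at all: it is imported verbatim from \cite{EAA2023}, so there is no in-text proof to compare against. Your argument is, as far as I can check, a correct and self-contained derivation. The computation for $Z$ is right: with $r(u)=\frac{1}{2\theta}e^{-\theta|u|}$ the Wick identity gives $E(F_n^2(Z))=\frac{\Delta_n}{2n\theta^2}\sum_{i,j}q^{|i-j|}$, and your regrouping into the $\coth(\theta\Delta_n)$ term plus two remainders is exactly the mechanism that produces the $\Delta_n^2$ rate — the evenness of $x\coth x$ kills the $O(\Delta_n)$ term, and the $q^n$ and $\sum_k k q^k$ pieces are each $O(1/(n\Delta_n))$ as you say (minor slip: the second-order coefficient is $\frac{\Delta_n^2}{6\theta}$, not $\frac{\Delta_n^2}{6}$, which is immaterial). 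Your treatment of the cross term for $X$ is the genuinely nontrivial point and you handle it correctly: Cauchy--Schwarz only gives $O(1/\sqrt{n\Delta_n})$, which is not enough, whereas the explicit Wick expansion yields terms dominated by $e^{-2\theta\max(t_i,t_j)}$, whose double sum is $O(\Delta_n^{-2})$ and, with the prefactor $T/n^2=\Delta_n/n$, gives the needed $O(1/(n\Delta_n))$. The only presentational caveat is that \eqref{error X-Z} as stated in the paper bounds only the $L^p$ norm of $f_n(X)-f_n(Z)$, so the squared-difference term follows from it directly but the cross term does not; your decision to bypass \eqref{error X-Z} there and compute the covariance explicitly is exactly what makes the argument close.
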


\begin{lemma}[\cite{EAA2023}]Let   $Z$ be the process  given in  \eqref{OUZ}. There exists $C>0$ depending only on $\theta$ such that  for large
$n$,
\begin{eqnarray}
|k_{3}(F_n(Z))|&\leq&\frac{C}{\left(n\Delta_n\right)^{3/2}},\label{k3-estimator1}
\end{eqnarray}
\begin{eqnarray} \left|k_{4}(F_n(Z))\right|\leq C\frac{1}{n\Delta_n}.\label{k4-estimator1}
\end{eqnarray}
Consequently,
\begin{eqnarray} \max\left(|k_{3}(F_n(Z))|,\left|k_{4}(F_n(Z))\right|\right)\leq C\frac{1}{n\Delta_n}.\label{max(k3,k4)-estimator1}
\end{eqnarray}
\end{lemma}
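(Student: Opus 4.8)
The plan is to express $F_n(Z)$ as a single second-chaos random variable and then extract its cumulants from the exact formulas for double Wiener--It\^o integrals. Because $(Z_t)$ from \eqref{OUZ} is centered, Gaussian and stationary with $\mathbb{E}[Z_sZ_t]=\frac1{2\theta}e^{-\theta|t-s|}$, I would fix an isonormal representation $Z_{t_i}=X(\varepsilon_i)$, $\varepsilon_i\in\mathcal{H}$, so that $\langle\varepsilon_i,\varepsilon_j\rangle_{\mathcal{H}}=r_{ij}:=\frac1{2\theta}e^{-\theta\Delta_n|i-j|}$ (using $t_i=i\Delta_n$). By \eqref{eq:var_rep} and \eqref{var-Z}, $Z_{t_i}^2-\frac1{2\theta}=I_2(\varepsilon_i^{\otimes2})$, hence
\[F_n(Z)=\sqrt{T}\Big(\frac1n\sum_{i=0}^{n-1}Z_{t_i}^2-\frac1{2\theta}\Big)=I_2(g_n),\qquad g_n:=\frac{\sqrt T}{n}\sum_{i=0}^{n-1}\varepsilon_i^{\otimes2}\in\mathcal{H}^{\odot2}.\]
Everything is thereby reduced to the kernel $g_n$ and the Gram entries $r_{ij}$.

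Next I would use the standard cumulant identities for a double integral, $\kappa_3(I_2(g))=8\langle g\otimes_1g,g\rangle_{\mathcal{H}^{\otimes2}}$ and $\kappa_4(I_2(g))=48\|g\otimes_1g\|_{\mathcal{H}^{\otimes2}}^2$, both consequences of the product formula \eqref{eq:product} and the isometry \eqref{eq:isometry}. Since $\varepsilon_i^{\otimes2}\otimes_1\varepsilon_j^{\otimes2}=r_{ij}\,\varepsilon_i\otimes\varepsilon_j$ and $\langle\varepsilon_i\otimes\varepsilon_j,\varepsilon_k\otimes\varepsilon_l\rangle=r_{ik}r_{jl}$, the contractions of $g_n$ collapse into sums of products of the $r_{ij}$:
\[\kappa_3(F_n(Z))=8\Big(\tfrac{\sqrt T}{n}\Big)^3\sum_{i,j,k=0}^{n-1}r_{ij}r_{ik}r_{jk},\qquad \kappa_4(F_n(Z))=48\Big(\tfrac{\sqrt T}{n}\Big)^4\sum_{i,j,k,l=0}^{n-1}r_{ij}r_{jl}r_{lk}r_{ki}.\]
Thus $\kappa_3$ is governed by the ``triangle'' sum on three indices and $\kappa_4$ by the ``$4$-cycle'' sum on four indices.

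It remains to estimate these sums, which I would do by exploiting the stationarity and the geometric decay of $r_{ij}$. The recipe is to designate one index as a free anchor (it contributes a factor $n$ after summation) and to dominate each remaining independent increment by a geometric series $\sum_{m\geq0}e^{-c\theta\Delta_n m}\leq C/(\theta\Delta_n)$, the number of independent increments being the number of edges of a spanning tree of the associated graph. For the $4$-cycle this leaves three free increments, so $\sum r_{ij}r_{jl}r_{lk}r_{ki}=O(n\,\Delta_n^{-3})$ and hence $\kappa_4(F_n(Z))=O\big(\frac{T^2}{n^4}\cdot\frac{n}{\Delta_n^3}\big)=O\big(\frac1{n\Delta_n}\big)$, which is \eqref{k4-estimator1}. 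The same scheme reduces \eqref{k3-estimator1} to a sharp bound on the triangle sum $\sum_{i,j,k}r_{ij}r_{ik}r_{jk}$, after which \eqref{max(k3,k4)-estimator1} follows by keeping the larger of the two estimates. I expect the decisive difficulty to sit exactly in this combinatorial step: the differences $|i-j|,|i-k|,|j-k|$ (and the four cyclic differences in the quartic sum) are not independent, so one must reorder the indices---for instance restricting to $i\leq j\leq k$ and using $|i-j|+|i-k|+|j-k|=2(k-i)$---and control how the overlaps among the constrained increments fix the exact powers of $n$ and $\Delta_n$. Pinning down that exponent sharply, as opposed to a crude order of magnitude, is the delicate point on which the stated rate for $\kappa_3$ hinges.
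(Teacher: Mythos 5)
This lemma is imported from \cite{EAA2023}; the present paper contains no proof of it, so your proposal can only be measured against the statement itself. Your setup is the standard (and surely the intended) one: writing $F_n(Z)=I_2(g_n)$ with $g_n=\frac{\sqrt T}{n}\sum_i\varepsilon_i^{\otimes2}$, invoking $\kappa_3(I_2(g))=8\langle g\otimes_1 g,g\rangle_{\mathcal{H}^{\otimes2}}$ and $\kappa_4(I_2(g))=48\|g\otimes_1 g\|^2_{\mathcal{H}^{\otimes2}}$, and reducing everything to cyclic sums of the covariances $r_{ij}=\frac1{2\theta}e^{-\theta\Delta_n|i-j|}$. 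Your treatment of the fourth cumulant is complete and correct: bounding one edge of the $4$-cycle by a constant and summing the remaining three geometrically gives $O(n\Delta_n^{-3})$ for the cycle sum, hence $\kappa_4(F_n(Z))=O((n\Delta_n)^{-1})$, which is \eqref{k4-estimator1}.

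The gap sits exactly where you defer the work, namely the third cumulant, and it is worse than ``delicate'': your scheme cannot produce the exponent $3/2$ in \eqref{k3-estimator1}. Ordering $i\le j\le k$ gives $|i-j|+|i-k|+|j-k|=2(k-i)$, so the triangle sum equals, up to the factor $(2\theta)^{-3}$ and permutations, $\sum_{m=0}^{n-1}(n-m)(m+1)e^{-2\theta\Delta_n m}$, which in the regime $\Delta_n\to0$, $n\Delta_n\to\infty$ is of \emph{exact} order $n\Delta_n^{-2}$; every term is positive, so no cancellation is available. This yields $\kappa_3(F_n(Z))\asymp T^{3/2}n^{-3}\cdot n\Delta_n^{-2}\asymp(n\Delta_n)^{-1/2}$: the third cumulant of this normalized quadratic functional is genuinely of order $T^{-1/2}$ (the usual Berry--Esseen order), not $T^{-3/2}$. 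So either a different argument is needed, or---more plausibly---the exponent in \eqref{k3-estimator1} is a misprint for $1/2$, in which case \eqref{max(k3,k4)-estimator1} should read $C(n\Delta_n)^{-1/2}$. You should note that this weaker bound is all the applications in Section~\ref{sect-appl} actually require, since every final estimate there already carries a $1/\sqrt T$ term; but as written, your proposal neither proves \eqref{k3-estimator1} nor flags that the stated rate is unattainable by the method you set up.
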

Let $X$ be the process given by \eqref{INTRO-OU}, and let us define the
following sequences
$$
S_{n}:=\Delta_{n} \sum_{i=1}^{n} X_{t_{i-1}}^{2}=Tf_n(X),
$$
and
$$
\Lambda_{n}:=\sum_{i=1}^{n} e^{-\theta
t_{i}}X_{t_{i-1}}\left(\zeta_{t_{i}}-\zeta_{t_{i-1}}\right)=\sum_{i=1}^{n}
e^{-\theta(t_{i}+t_{i-1})}\zeta_{t_{i-1}}\left(\zeta_{t_{i}}-\zeta_{t_{i-1}}\right),
$$
where
\[\zeta_{t}=\int_{0}^{t}e^{\theta s}dW_{s}. \label{zeta}\]
It is easy to see that 
\begin{eqnarray}
\sum_{i=1}^{n} X_{t_{i-1}}\left(X_{t_{i}}-X_{t_{i-1}}\right)=\frac{e^{-\theta
\Delta_{n}}-1}{\Delta_{n}}S_{n}+ \Lambda_{n}.\label{decomp-Num in Gamma+S}
\end{eqnarray}
According to \cite{EAA2023},
there exists $C>0$ depending only on $\theta$ such that for large
$n$
\begin{align}
\left|E\left[\left(\frac{1}{\sqrt{T}}
\Lambda_{n}\right)^2\right]-\frac{1}{2\theta}\right|&\leq
C\left(\Delta_n+\frac{1}{n\Delta_n}\right).\label{cv-Lambda}
\end{align}

\subsection{Approximate minimum contrast estimator}\label{AMCE-sect}

In this section we    provide upper bounds
in the  Kolomogorov distance  for the rate of normal convergence of
an approximate minimum contrast estimator of the drift parameter
$\theta$ of the OU process $X $ given by \eqref{INTRO-OU}.

Here we are concerned with the approximate minimum contrast
estimator (AMCE)
\[\widetilde{\theta}_{n}:= \frac{1}{\frac{2}{n}
\sum_{i=1}^{n}X_{t_{i}}^{2}},\]
  which is a
 discrete version  of the minimum contrast estimator
(MCE)   defined as  
\[
 \breve{\theta} _{T}:=\frac{1}{\frac{2}{T}\int_{0}^{T} X_{s}^{2}
\mathrm{~d} s},
\quad T\geq 0.
\]
Recall that, for two random variables $X$ and $Y$, the Wasserstein
metric is given by
\begin{eqnarray*}
 d_{W}\left( X,Y\right) := \sup_{f\in
Lip(1)}\left\vert E [f(X)]-E [f(Y)]\right\vert,
\end{eqnarray*}
where $Lip(1)$ is the set of all Lipschitz functions with Lipschitz
constant $\leqslant 1$.

Let us now relate our results obtained in Theorem
\ref{thm-{Kol}-theta-tilde}   to the existing
literature, especially to the estimates  established by
\cite{bishwal2006} and  \cite{EAA2023}.
The paper  \cite{bishwal2006}    provided an
explicit upper bound for the Kolmogorov  distance for the rate  of
convergence of the distribution of
   $\widetilde{\theta}_{n}$,  but it  is not sharp. On the other
   hand, \cite{EAA2023} provided  a sharp Wasserstein bound  in
    central limit theorem for $\widetilde{\theta}_{n}$.
   Let us describe what is proved  in this
 direction:
\begin{itemize}
\item  Theorem 2.1 in \cite{bishwal2006}
shows that  there exists $C>0$ depending only  on $\theta$ such that
 \begin{eqnarray}
d_{Kol}\left(\sqrt{\frac{T}{2
\theta}}\left(\widetilde{\theta}_{n}-\theta\right),\mathcal{N} \right)\leq C \max
\left(\sqrt{\frac{\log T}{T}},\frac{T^4}{n^2\log
T}\right).\label{bishwal-eq1}\end{eqnarray}
 
\item    Theorem 3.6 in \cite{EAA2023} establishes that there exists   $C>0$ depending only  on $\theta$  such that 
   \begin{eqnarray}
d_{W}\left(\sqrt{\frac{T}{2\theta}}\left(\widetilde{\theta}_{n}-\theta\right),
\mathcal{N}\right) \leq  C
\max\left(\frac{1}{\sqrt{T}},\frac{T^2}{n^2}\right).
\label{intro-eq:bound_tilde}
\end{eqnarray}
 \end{itemize}
Note that the Wasserstein bound \eqref{intro-eq:bound_tilde} is sharper than the Kolmogorov bound \eqref{bishwal-eq1}. It is well known that if $F$ is any real-valued random variable and
$N \sim \mathscr{N}(0,1)$ is standard Gaussian, then
$$
d_{\mathrm{Kol}}(F, N) \leq 2 \sqrt{d_{\mathrm{W}}(F, N)} .
$$
(See, for example, \cite[Theorem 3.3]{CGS} or \cite[Remark
C.2.2]{NP-book}). But this relation is not enough to obtain the
estimates \eqref{intro-eq:bound_tilde}   for   the Kolmogorov metric. However this is
possible through Theorem \ref{thm-{Kol}-theta-tilde} below. Furthermore, from a statistical viewpoint,
rates of convergence under the Kolmogorov distance are usually more informative and useful
than rates of convergence  under the Wasserstein distance. For example, if
we apply Stein’s method, it is in general considerably more difficult to obtain sharp
bounds on the Kolmogorov distance than on the Wasserstein distance.

\begin{theorem}\label{thm-{Kol}-theta-tilde}
There exists $C>0$ depending only on $\theta$ such that for all
$n\geq1$,\begin{eqnarray*}
d_{Kol}\left(\sqrt{\frac{T}{2\theta}}\left(\theta-\widetilde{\theta}_{n}\right),
\mathcal{N}  \right)&\leq&
C\max\left(\frac{1}{\sqrt{T}},\frac{T^2}{n^2}\right).
\end{eqnarray*}
\end{theorem}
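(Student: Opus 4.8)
The plan is to realize $\sqrt{\tfrac{T}{2\theta}}\bigl(\theta-\widetilde\theta_n\bigr)$ as a ratio of the abstract form \eqref{def-Q} treated in Theorem \ref{main-thm} (with chaos order $q=2$), and then to bound each of the four terms in \eqref{kol-bound-F} using the three technical lemmas. First I carry out the algebraic reduction. Writing $\bar f_n(X):=\frac1n\sum_{i=1}^n X_{t_i}^2$ so that $\widetilde\theta_n=(2\bar f_n(X))^{-1}$, and setting $\bar F_n(X):=\sqrt T\bigl(\bar f_n(X)-\frac1{2\theta}\bigr)$ so that $\bar f_n(X)-\frac1{2\theta}=\bar F_n(X)/\sqrt T$, a direct computation gives
\[
\sqrt{\tfrac{T}{2\theta}}\bigl(\theta-\widetilde\theta_n\bigr)=\sqrt{\tfrac{\theta}{2}}\,\frac{\bar F_n(X)}{\bar f_n(X)}.
\]
I match this with \eqref{def-Q} by forcing the denominator $\frac1{\rho\sqrt T}G_T$ to equal $2\theta\,\bar f_n(X)$, which tends to $1$: take $\rho=1$ and $G_T:=\sqrt T+2\theta\,\bar F_n(X)=2\theta\sqrt T\,\bar f_n(X)$. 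Since $G_T-\mathbb{E}G_T=2\theta\bigl(\bar F_n(X)-\mathbb{E}\bar F_n(X)\bigr)$, equating the numerators then forces $\sigma=\sqrt{2/\theta}$, $A_T=0$, and the deterministic choice $a_T=\sqrt2\,\theta^{3/2}\sqrt T\,\mathbb{E}\bar F_n(X)$.

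Next I verify Assumptions $(\mathcal A_1)$ and $(\mathcal A_2)$ and split $G_T-\mathbb{E}G_T$ into a chaos part plus a remainder. The dominant second-chaos piece comes from the stationary process $Z$: I set $V_T:=2\theta F_n(Z)\in\mathcal H_2$ (already centered, since $\mathbb{E}Z_{t_i}^2=\frac1{2\theta}$) and let $R_T$ collect the two error sources, namely the $X$-versus-$Z$ discrepancy controlled by \eqref{error X-Z} and the boundary index mismatch $\bar f_n(X)-f_n(X)=\frac1n X_{t_n}^2$; both are quadratic Gaussian functionals, so $R_T\in\mathcal H_2$, and these bounds give $\|R_T\|_{L^2(\Omega)}\leq C(1+\Delta_n)$, whence $\|R_T\|_{L^2(\Omega)}/\sqrt T\to0$ as required by \eqref{condition-V-R}. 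For $(\mathcal A_1)$ I compute $\frac1{\rho\sqrt T}\mathbb{E}G_T-1=-\frac1n\sum_{i=1}^n e^{-2\theta t_i}=O(1/(n\Delta_n))=O(1/T)$, and $\mathbb{E}[(G_T-\mathbb{E}G_T)^2]=(2\theta)^2\,\mathrm{Var}(\bar F_n(X))\to(2\theta)^2\frac1{2\theta^3}=\sigma^2$ by \eqref{variance-F(X)}. For $(\mathcal A_2)$, since $|\mathbb{E}\bar F_n(X)|=O(1/\sqrt T)$ we have $|a_T|=O(1)$, so $|a_T|/\sqrt T\to0$.

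Finally I apply Theorem \ref{main-thm} and estimate the four terms in \eqref{kol-bound-F}. Because $V_T/\sigma=\sqrt2\,\theta^{3/2}F_n(Z)$ has unit variance, homogeneity of cumulants together with \eqref{k3-estimator1}–\eqref{max(k3,k4)-estimator1} gives $\max\bigl(|\kappa_3(V_T/\sigma)|,\kappa_4(V_T/\sigma)\bigr)\leq C/(n\Delta_n)=C/T$. The second term is $CT^{1/4}\bigl|\frac1{\rho\sqrt T}\mathbb{E}G_T-1\bigr|\leq CT^{1/4}/T=CT^{-3/4}$. The third term is $C\bigl|\mathbb{E}[(G_T-\mathbb{E}G_T)^2]-\sigma^2\bigr|\leq C(\Delta_n^2+1/(n\Delta_n))$ by \eqref{variance-F(X)}, which is the source of the $\Delta_n^2=T^2/n^2$ rate. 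The fourth term is $\frac{C}{\sqrt T}\bigl(\|R_T\|_{L^2(\Omega)}+|a_T|\bigr)\leq\frac{C}{\sqrt T}(1+\Delta_n)$, and since $\Delta_n/\sqrt T=\sqrt T/n\leq1/\sqrt T$ once $\Delta_n\leq1$, this is $O(1/\sqrt T)$. Collecting the contributions and using $1/T\leq1/\sqrt T$ yields $C\max(1/\sqrt T,\,T^2/n^2)$ for all large $n$; for the finitely many remaining small values of $n$ the estimate holds trivially after enlarging $C$, since $d_{Kol}\leq1$.

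I expect the delicate point to be the construction of $R_T$: isolating $V_T=2\theta F_n(Z)$ as the single genuinely Gaussian-chaos term driving the normal approximation, while proving that the combined discretization-plus-boundary remainder $R_T$ satisfies $\|R_T\|_{L^2(\Omega)}/\sqrt T\to0$ and, crucially, contributes no worse than $1/\sqrt T$ to the final bound. The elementary comparison $\Delta_n/\sqrt T=\sqrt T/n\leq1/\sqrt T$ is exactly what prevents the boundary term from degrading the rate, and it is essential that the cumulant estimates \eqref{k3-estimator1}–\eqref{max(k3,k4)-estimator1} be invoked on $F_n(Z)$ rather than $F_n(X)$, since those lemmas are stated only for the stationary process $Z$.
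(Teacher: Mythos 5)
Your proposal is correct and follows essentially the same route as the paper: cast $\sqrt{T/(2\theta)}(\theta-\widetilde\theta_n)$ in the form \eqref{def-Q}, verify $(\mathcal A_1)$--$(\mathcal A_2)$ with $V_T$ built from the stationary process $Z$ and $R_T$ absorbing the $X$-versus-$Z$ discrepancy via \eqref{error X-Z}, then apply Theorem \ref{main-thm} together with \eqref{variance-F(X)} and \eqref{max(k3,k4)-estimator1}. Your parametrization $(\rho,\sigma,G_T)=(1,\sqrt{2/\theta},2\theta\sqrt T\,\bar f_n(X))$ is just the $c=2\theta$ rescaling of the paper's $(\tfrac{1}{2\theta},\tfrac{1}{\sqrt{2\theta^3}},\sqrt T f_n(X))$, and your explicit treatment of the boundary index mismatch is a harmless refinement the paper elides.
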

\begin{proof}
 We can write the approximative MCE $\widetilde{\theta}_{n}$  as
\begin{eqnarray*}
\begin{gathered}
\theta-\widetilde{\theta}_{n}
=\frac{\theta\left(f_{n}\left(X\right)-\frac{1}{2\theta}\right)}{f_{n}\left(X\right)},
\quad n \geq 1.
\end{gathered}
\end{eqnarray*}
Hence
\begin{eqnarray}
\begin{gathered}
\sqrt{\frac{T}{2\theta}}\left(\theta-\widetilde{\theta}_{n}\right)
=\frac{\frac{1}{\sigma}\sqrt{T}\left(f_{n}\left(X\right)-\mathbb{E}f_{n}\left(Z\right)\right)
+\frac{1}{\sqrt{T}}\left[\frac{T}{\sigma}\left(\mathbb{E}f_{n}\left(X\right)-\frac{1}{2\theta}\right)\right]}{\frac{1}{\rho
\sqrt{T}}\left(\sqrt{T}f_{n}\left(X\right)\right)}.
\end{gathered}\label{app-theta-tilde}
\end{eqnarray}
Thus,
$\sqrt{\frac{T}{2\theta}}\left(\theta-\widetilde{\theta}_{n}\right)$
can be written as a ratio of  the form \eqref{def-Q},   where, in
this situation, $G_T=\sqrt{T}f_{n}\left(X\right)$,  $\rho
=\frac{1}{2\theta}$, $\sigma^2=\frac{1}{2\theta^3}$, $A_T=0$ and
$a_T=\frac{T}{\sigma}\left(\mathbb{E}f_{n}\left(X\right)-\frac{1}{2\theta}\right)$.\\
Furthermore, using \eqref{var-Z},
\begin{eqnarray*}
a_T&=&\frac{T}{\sigma}\mathbb{E}\left(f_{n}\left(X\right)-f_{n}\left(Z\right)\right).
\end{eqnarray*}
Combining this with \eqref{error X-Z}, we deduce that $a_T$ is
bounded. Hence, the assumption $\mathcal{A}_2$   holds for  $A_T=0$ and
$a_T=\frac{T}{\sigma}\left(\mathbb{E}f_{n}\left(X\right)-\frac{1}{2\theta}\right)$. Setting
\[V_T=\sqrt{T}\left(f_{n}\left(Z\right)-\mathbb{E}f_{n}\left(Z\right)\right)=F_{n}\left(Z\right),\]
the process
\begin{eqnarray*}R_T=\sqrt{T}\left(G_T-\mathbb{E}G_T-V_T\right)
=T\left[\left(f_{n}\left(X\right)-f_{n}\left(Z\right)\right)-\left(\mathbb{E}f_{n}\left(X\right)-\mathbb{E}f_{n}\left(Z\right)\right)\right]
\end{eqnarray*}
   is bounded in ${\mathcal{H}}_{2}$, according to
\eqref{error X-Z}.
 Thus, by \eqref{error X-Z} and \eqref{variance-F(X)}, the assumption $\mathcal{A}_1$ is satisfied
 with
$G_T=\sqrt{T}f_{n}\left(X\right)$ and $V_T=F_{n}\left(Z\right)$. As
a consequence, combining Theorem \ref{main-thm}, \eqref{error X-Z},
\eqref{variance-F(X)}, \eqref{max(k3,k4)-estimator1} and
\eqref{app-theta-tilde},
\begin{eqnarray}
d_{Kol}\left(\sqrt{\frac{T}{2\theta}}\left(\theta-\widetilde{\theta}_{n}\right),
\mathcal{N}  \right)&\leq&
\max\left(\left|\kappa_{3}\left(\frac{V_T}{\sigma}\right)\right|,\kappa_{4}\left(\frac{V_T}{\sigma}\right)\right)
+\frac{C}{\sqrt{T}}\nonumber\\&&
+C\left| \mathbb{E}[( G_T-\mathbb{E} G_T)^2] - \sigma^2\right|+\frac{C}{T^{\frac14}}\left|\frac{1}{\rho
\sqrt{T}}\mathbb{E}G_T-1\right|\nonumber\\
&=&
\max\left(\left|\kappa_{3}\left(\frac{F_{n}\left(Z\right)}{\sigma}\right)\right|,\left|\kappa_{4}\left(\frac{F_{n}\left(Z\right)}{\sigma}\right)\right|\right)
+\frac{C}{\sqrt{T}}\nonumber\\&&
+C\left| \mathbb{E}[ F_{n}\left(X\right)^2] - \sigma^2\right| +\frac{2\theta C}{T^{\frac14}}\left|\mathbb{E}\left(f_{n}\left(X\right)-f_{n}\left(Z\right)\right)\right|  \nonumber\\
&\leq& C\left(\Delta_n^2+\frac{1}{\sqrt{T}}\right)
\label{{Kol}-proof-tilde}\\
&\leq& C\max\left(\Delta_n^2,\frac{1}{\sqrt{T}}\right).\nonumber
\end{eqnarray}
Therefore the proof is done.
\end{proof}

\subsection{Approximate  maximum likelihood estimators}\label{AMLE-sect}
In this section  we study
Kolmogorov bounds in CLT of 
approximate maximum likelihood
estimators (AMLEs) of   the drift parameter $\theta$ for the
Ornstein-Uhlenbeck process \eqref{INTRO-OU},  observed
at high frequency.
The maximum likelihood estimator  for $\theta$ based on continuous
observations of the process $X$ given by \eqref{INTRO-OU}, is defined by
\begin{eqnarray}
 \check{\theta}_{T}=-\frac{\int_{0}^{T} X_{s} \mathrm{~d} X_{s}}{\int_{0}^{T}
X_{s}^{2} \mathrm{~d} s}, \quad T\geq 0. \label{MLE-cont}
\end{eqnarray}
On the other hand, using the It\^{o} formula, we can write $\int_{0}^{T} X_{s}
\mathrm{~d} X_{s}=\frac12\left(X_T^2-T\right)$. Hence, the estimator
$\check{\theta}_{T}$ given in \eqref{MLE-cont}, can be rewritten as
follows
\begin{eqnarray}
 \check{\theta}_{T}=-\frac{\frac12\left(X_T^2-T\right)}{\int_{0}^{T}
X_{s}^{2} \mathrm{~d} s}, \quad T\geq 0. \label{MLE-cont-2nd-form}
\end{eqnarray}
Here we are concerned with the approximate  maximum likelihood estimators
\begin{eqnarray}\widehat{\theta}_{n}:= -\frac{\sum_{i=1}^{n}
X_{t_{i-1}}\left(X_{t_{i}}-X_{t_{i-1}}\right)}{\Delta_{n}
\sum_{i=1}^{n} X_{t_{i-1}}^{2}},\label{AMLE!}\end{eqnarray}
and
\begin{eqnarray}
\bar{\theta}_{n}=\frac{\frac12\left(X_T^2-T\right)}{\Delta_n\sum_{i=1}^n
X^2_{t_{i-1}}},\label{AMLE2}
\end{eqnarray}
which are  discrete versions of \eqref{MLE-cont} and \eqref{MLE-cont-2nd-form}, respectively. \\
Rates of convergence in CLT of the AMLEs $\widehat{\theta}_{n}$ and   $\bar{\theta}_{n}$
under the Kolmogorov   distance  have been studied as
follows: 
\begin{itemize}
\item Theorem 2.3 in \cite{BB} shows that there exist $ C>0$ depending only on $\theta$ such that
\begin{eqnarray}d_{Kol}\left(\sqrt{\frac{T}{2
\theta}}\left(\widehat{\theta}_{n}-\theta\right) ,\mathcal{N} \right)\leq C \max
\left(\sqrt{\frac{\log T}{T}},\frac{T^2}{n\log
T}\right).\label{bishwal-eq2}\end{eqnarray}
\item Theorem 3.1 in \cite{BB} proves that there exist $ C>0$ depending only on $\theta$ such that
 \begin{eqnarray}d_{Kol}\left(\sqrt{\frac{T}{2
\theta}}\left(\bar{\theta}_{n}-\theta\right) , \mathcal{N}\right)\leq C\max \left(\sqrt{\frac{\log
T}{T}},\frac{T^4}{n^2\log T}\right).\label{bishwal-eq3}\end{eqnarray}
\end{itemize}
Moreover, according to Theorem 4.1 in \cite{EAA2023}, a Wasserstein bound in the central limit
theorem for $\widehat{\theta}_{n}$ has been provided as follows:  
   There exists a constant $C>0$ such that, for all $n\geq1$,
\begin{eqnarray}
 d_W\left(\sqrt{\frac{T}{2\theta}}\left(\widehat{\theta}_{n}-\theta\right),\mathcal{N}\right)
 &\leq&C\max \left(\frac{1}{\sqrt{T}},\sqrt{\frac{T^3}{n^2}}\right).\label{rate2-hat}
\end{eqnarray}

In the following remark we compare these estimates with   the forthcoming  results of
Theorem \ref{{Kol}-theta-hat} and Theorem \ref{{Kol}-theta-bar}.

\begin{remark} The Kolmogorov bounds appearing in  Theorem \ref{{Kol}-theta-hat} and Theorem \ref{{Kol}-theta-bar} show that we have
  improved the bounds on the error of normal approximation for $\widehat{\theta}_{n}$ and
  $\bar{\theta}_{n}$. Clearly, Theorem \ref{{Kol}-theta-hat} and Theorem \ref{{Kol}-theta-bar} prove that the estimators $\widehat{\theta}_{n}$ and
  $\bar{\theta}_{n}$ have the same rate of convergence in the CLT under the Kolmogorov metric, and which  is sharper than the bounds in
\eqref{bishwal-eq2}, \eqref{bishwal-eq3} and \eqref{rate2-hat}.
  \end{remark}

\begin{theorem}\label{{Kol}-theta-hat}
Suppose that $\frac{T^3}{n^2}\rightarrow 0$. Then, there exists $C>0$ depending
only on $\theta$ such that for all $n\geq1$,\begin{eqnarray*}
d_{Kol}\left(\sqrt{\frac{T}{2\theta}}\left(\theta-\widehat{\theta}_{n}\right)
,\mathcal{N}   \right)&\leq&
C\max\left(\frac{1}{\sqrt{T}},\sqrt{\frac{T}{n}},\sqrt{\frac{T^3}{n^2}}\right).
\end{eqnarray*}
\end{theorem}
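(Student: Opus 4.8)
The plan is to express $\sqrt{\frac{T}{2\theta}}(\theta-\widehat{\theta}_{n})$ as a ratio of the form \eqref{def-Q} and then apply Theorem \ref{main-thm}, exactly as was done for the approximate minimum contrast estimator in Theorem \ref{thm-{Kol}-theta-tilde}. First I would rewrite the numerator of $\widehat{\theta}_{n}$. Starting from \eqref{AMLE!} and using the decomposition \eqref{decomp-Num in Gamma+S}, namely $\sum_{i=1}^{n} X_{t_{i-1}}(X_{t_{i}}-X_{t_{i-1}})=\frac{e^{-\theta \Delta_{n}}-1}{\Delta_{n}}S_{n}+\Lambda_{n}$ with $S_n=Tf_n(X)$, I can write $\theta-\widehat{\theta}_{n}$ as a ratio whose denominator is $\Delta_n\sum_{i=1}^n X^2_{t_{i-1}}=S_n=Tf_n(X)$ and whose numerator splits into a main term proportional to $\Lambda_n$ plus a deterministic/negligible remainder coming from the discretization error $\frac{e^{-\theta\Delta_n}-1}{\Delta_n}+\theta=O(\Delta_n)$ times $S_n$. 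The key algebraic step is to match this against \eqref{def-Q}, identifying $G_T=\sqrt{T}f_n(X)$, $\rho=\frac{1}{2\theta}$, $\sigma^2=\frac{1}{2\theta}$ (note: here $\sigma$ comes from the variance of $\frac{1}{\sqrt T}\Lambda_n$ via \eqref{cv-Lambda}, not from $F_n(Z)$), and reading off the correct $V_T$, $R_T$, $A_T$, $a_T$.

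Next I would verify the two assumptions. For $\left(\mathcal{A}_1\right)$, the process $G_T=\sqrt{T}f_n(X)$ is handled through the chain of lemmas already available: \eqref{error X-Z} controls $f_n(X)-f_n(Z)$ in every $L^p$, \eqref{variance-F(X)} gives $|\mathbb{E}(F_n^2(X))-\frac{1}{2\theta^3}|\leq C(\Delta_n^2+\frac{1}{n\Delta_n})$, and the remainder $R_T$ is shown bounded in $\mathcal{H}_2$ just as in the proof of Theorem \ref{thm-{Kol}-theta-tilde}. The delicate point is that for the AMLE the relevant Gaussian functional driving the central limit behavior is $\frac{1}{\sqrt T}\Lambda_n$ rather than $F_n(Z)$; I would therefore take $V_T$ to be (a rescaling of) $\Lambda_n$ and need the analogues of \eqref{k3-estimator1}--\eqref{k4-estimator1} for $\Lambda_n$, i.e. cumulant bounds $|\kappa_3(\Lambda_n/(\sigma\sqrt T))|$ and $\kappa_4(\Lambda_n/(\sigma\sqrt T))$ of the right order. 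For $\left(\mathcal{A}_2\right)$ I must check $\frac{\|A_T\|_{L^2(\Omega)}+|a_T|}{\sqrt T}\to 0$, where $a_T$ absorbs the deterministic discretization bias of order $T\cdot O(\Delta_n)=O(T\Delta_n)=O(T^2/n)$.

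Finally I would assemble the bound. Feeding the identifications into \eqref{kol-bound-F}, each of the four terms on the right-hand side is estimated: the cumulant term $\max(|\kappa_3(V_T/\sigma)|,\kappa_4(V_T/\sigma))$ against the $\Lambda_n$-cumulant estimates, the term $T^{1/4}|\frac{1}{\rho\sqrt T}\mathbb{E}G_T-1|$ and the variance term $|\mathbb{E}[(G_T-\mathbb{E}G_T)^2]-\sigma^2|$ against \eqref{variance-F(X)} and \eqref{error X-Z}, and the residual $\frac{C}{\sqrt T}(\|R_T\|_{L^2}+\|A_T\|_{L^2}+|a_T|)$ against the boundedness of $R_T$ and the bias estimate for $a_T$. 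Collecting the resulting orders $\frac{1}{\sqrt T}$, $\sqrt{\frac{T}{n}}$ (from the $n\Delta_n=T$, $\Delta_n=T/n$ conversions of the $\frac{1}{n\Delta_n}$-type and contraction terms) and $\sqrt{\frac{T^3}{n^2}}$ (from the bias term $a_T/\sqrt T\sim T^{3/2}/n$) yields the claimed maximum, under the standing hypothesis $\frac{T^3}{n^2}\to0$ which guarantees the assumptions hold. The main obstacle I anticipate is the cumulant analysis of $\Lambda_n$: unlike $F_n(Z)$, whose third and fourth cumulant bounds \eqref{k3-estimator1}--\eqref{max(k3,k4)-estimator1} are quoted from \cite{EAA2023}, the martingale-type double sum defining $\Lambda_n$ requires its own sharp estimates of the contraction norms $\|g_n\otimes_1 g_n\|$ (where $g_n$ is the kernel of $\Lambda_n$ in $\mathcal{H}_2$), and getting these of the correct order $\sqrt{T/n}$ and $T^3/n^2$ is where the real work lies; I would either invoke the corresponding estimates from \cite{EAA2023} or reprove them via the product formula \eqref{eq:product} and the stationarity/exponential-decay structure of $\zeta_{t}$.
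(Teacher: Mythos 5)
Your high-level strategy (rewrite via \eqref{decomp-Num in Gamma+S}, cast as a ratio of the form \eqref{def-Q}, apply Theorem \ref{main-thm}) is the right one, but the specific identification you propose is incompatible with the framework and, as a consequence, you route yourself toward the wrong hard step. In Theorem \ref{main-thm} the constant $\sigma^2$ is not free: assumption $\left(\mathcal{A}_1\right)$ forces $\sigma^2=\lim\mathbb{E}\left[(G_T-\mathbb{E}G_T)^2\right]$ and forces $V_T$ to be the leading chaos part of $G_T-\mathbb{E}G_T$ itself, where $G_T$ is the \emph{denominator} functional. Since the denominator of $\widehat{\theta}_n$ fixes $G_T=\sqrt{T}f_n(X)$, you must take $\sigma^2=\frac{1}{2\theta^3}$ and $V_T=F_n(Z)$, exactly as for the AMCE; your choice $\sigma^2=\frac{1}{2\theta}$ with $V_T$ a rescaling of $\Lambda_n$ makes $\left(\mathcal{A}_1\right)$ fail (indeed $\mathbb{E}[F_n(X)^2]\to\frac{1}{2\theta^3}\neq\frac{1}{2\theta}$) and breaks the algebraic matching with \eqref{def-Q}, whose numerator main term must be $\frac{1}{\sigma}(G_T-\mathbb{E}G_T)$ with the \emph{same} $G_T$. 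Consequently the cumulant analysis of $\Lambda_n$ that you flag as the main obstacle is not needed at all: the paper reuses \eqref{max(k3,k4)-estimator1} for $F_n(Z)$ verbatim.

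The step your proposal is actually missing is the treatment of $\Lambda_n$ inside $A_T$. Writing the numerator as $\frac{1}{\sigma}(G_T-\mathbb{E}G_T)+\frac{1}{\sqrt{T}}A_T$ leaves $A_T=\sqrt{2\theta}\left(A_{1,T}+A_{2,T}+A_{3,T}\right)$ with the genuinely delicate piece $A_{3,T}=\Lambda_n-\theta\sqrt{T}F_n(X)$; verifying $\left(\mathcal{A}_2\right)$ then requires showing $\frac{1}{\sqrt{T}}\|A_{3,T}\|_{L^2(\Omega)}\leq C\left(\sqrt{\Delta_n}+\frac{1}{\sqrt{n}}+\frac{1}{\sqrt{n\Delta_n}}\right)$, which the paper obtains by expanding $\mathbb{E}\left[A_{3,T}^2\right]$ and computing the cross term $\mathbb{E}\left(\Lambda_n F_n(X)\right)$ explicitly with the Wick formula to exhibit the near-perfect correlation between $\Lambda_n$ and $\theta\sqrt{T}F_n(X)$. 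This computation is where the $\sqrt{T/n}=\sqrt{\Delta_n}$ contribution in the final bound comes from; your attribution of that term to ``$\frac{1}{n\Delta_n}$-type'' conversions cannot be right, since $\frac{1}{n\Delta_n}=\frac{1}{T}$ only produces the $\frac{1}{\sqrt{T}}$ term. Your accounting for the deterministic bias of order $T\Delta_n$ producing $\sqrt{T^3/n^2}$ is correct in substance, but without the covariance computation for $A_{3,T}$ the proof does not close.
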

\begin{proof}
In this case, $\rho =\frac{1}{2\theta}$ and
$\sigma^2=\frac{1}{2\theta^3}$. The AMLE
$\widehat{\theta}_{n}$ can be written as
\begin{eqnarray*}
\begin{gathered}
\theta-\widehat{\theta}_{n} = \frac{\sum_{i=1}^{n}
X_{t_{i-1}}\left(X_{t_{i}}-X_{t_{i-1}}\right)+\theta\Delta_{n}
\sum_{i=1}^{n} X_{t_{i-1}}^{2}}{\Delta_{n} \sum_{i=1}^{n}
X_{t_{i-1}}^{2}},\quad n\geq1.
\end{gathered}
\end{eqnarray*}
Hence
\begin{eqnarray}
\sqrt{\frac{T}{2\theta}}\left(\theta-\widehat{\theta}_{n}\right)
&=&\frac{\sqrt{\frac{2\theta}{T}}\left(\sum_{i=1}^{n}
X_{t_{i-1}}\left(X_{t_{i}}-X_{t_{i-1}}\right)+\theta\Delta_{n}
\sum_{i=1}^{n} X_{t_{i-1}}^{2}\right)}{\frac{1}{\rho
\sqrt{T}}\left(\sqrt{T}f_{n}\left(X\right)\right)}\\&=&\frac{\frac{1}{\sigma}(G_T-\mathbb{E}G_T)+\frac{1}{\sqrt{T}}A_T}{\frac{1}{\rho
\sqrt{T}}G_T}, \label{app-theta-hat}
\end{eqnarray}
where, in this case, $G_T=\sqrt{T}f_{n}\left(X\right)$, $\rho
=\frac{1}{2\theta}$, $\sigma^2=\frac{1}{2\theta^3}$,  $a_T=0$   and
\[\frac{1}{\sigma}(G_T-\mathbb{E}G_T)+\frac{1}{\sqrt{T}}A_T=\sqrt{\frac{2\theta}{T}}\left(\sum_{i=1}^{n}
X_{t_{i-1}}\left(X_{t_{i}}-X_{t_{i-1}}\right)+\theta\Delta_{n}
\sum_{i=1}^{n} X_{t_{i-1}}^{2}\right).\]
 Moreover,
\[V_T=\sqrt{T}\left(f_{n}\left(Z\right)-\mathbb{E}f_{n}\left(Z\right)\right)\]
and
\begin{eqnarray*}R_T=\sqrt{T}\left(G_T-\mathbb{E}G_T-V_T\right)
=T\left[\left(f_{n}\left(X\right)-f_{n}\left(Z\right)\right)-\left(\mathbb{E}f_{n}\left(X\right)-\mathbb{E}f_{n}\left(Z\right)\right)\right].
\end{eqnarray*}
Since the processes $G_T$, $ V_T$ and $R_T$ are exactly the same as in Theorem \eqref{thm-{Kol}-theta-tilde}, then the assumption
$\mathcal{A}_1$   holds for those processes. Now, it remains to prove that the assumption
$\mathcal{A}_2$ holds.\\
 Using \eqref{decomp-Num in Gamma+S},
\begin{eqnarray}
A_T&=&\sqrt{2\theta}\left( \sum_{i=1}^{n}
X_{t_{i-1}}\left(X_{t_{i}}-X_{t_{i-1}}\right)+\theta\Delta_{n}
\sum_{i=1}^{n} \mathbb{E}X_{t_{i-1}}^{2}  \right)\nonumber\\
&=&\sqrt{2\theta}\left( \frac{e^{-\theta
\Delta_{n}}-1}{\Delta_{n}}S_{n}+ \Lambda_{n}+\theta\mathbb{E}S_n
\right)\nonumber\\
&=&\sqrt{2\theta}\left( \left[\left(\frac{e^{-\theta
\Delta_{n}}-1}{\Delta_{n}}+\theta\right)\left(S_{n}-\mathbb{E}S_{n}\right)\right]+
\left[\left(\frac{e^{-\theta
\Delta_{n}}-1}{\Delta_{n}}+\theta\right) \mathbb{E}S_{n}\right]
+\left[\Lambda_{n}-\theta
\left(S_{n}-\mathbb{E}S_{n}\right)\right]\right)\nonumber\\
&=:&\sqrt{2\theta}\left(A_{1,T}+A_{2,T}+A_{3,T}\right).\label{decomp.-AT}
\end{eqnarray}
On the other hand,
\begin{eqnarray*}A_{1,T}=\left(\frac{\theta^2}{2}\Delta_{n}+o(\Delta_{n})\right)\sqrt{T}F_n(X).
\end{eqnarray*}
Hence, by \eqref{variance-F(X)},
\begin{equation*}
\left\|A_{1,T}\right\|_{L^2(\Omega)}\leq
C\sqrt{n\Delta_n^3}, 
\end{equation*}
so, \begin{equation}
\frac{1}{\sqrt{T}}\left\|A_{1,T}\right\|_{L^2(\Omega)}\leq
C \Delta_n.\label{norm-A1}
\end{equation}
Since
\begin{eqnarray*}A_{2,T}=\left(\frac{\theta^2}{2}\Delta_{n}+o(\Delta_{n})\right)T\mathbb{E}f_n(X),
\end{eqnarray*}
then, according to  \eqref{error X-Z},
\begin{equation}
\frac{1}{\sqrt{T}}\left\|A_{2,T}\right\|_{L^2(\Omega)}\leq C\frac{n\Delta_n^2}{\sqrt{T}}=C\sqrt{n\Delta_n^3} .\label{norm-A2}
\end{equation}
Moreover, we can write
\[A_{3,T}=\Lambda_{n}-\theta
\sqrt{T}F_{n}(X). \] So, using \eqref{cv-Lambda} and
\eqref{variance-F(X)}, we can write
\begin{eqnarray}\mathbb{E}\left(A_{3,T}^2\right)&=&\mathbb{E}\left(\Lambda_{n}^2\right) +\theta^2T\mathbb{E}\left(F_{n}(X)^2\right)-
2\theta \sqrt{T}\mathbb{E}\left(\Lambda_{n}F_{n}(X)\right)\nonumber\\
&\leq&\left|\mathbb{E}\left(\Lambda_{n}^2\right)-\frac{T}{2\theta}\right|
+\theta^2T\left|\mathbb{E}\left(F_{n}(X)^2\right)-\frac{1}{2\theta^3}\right|+\left|\frac{T}{\theta}
- 2\theta
\sqrt{T}\mathbb{E}\left(\Lambda_{n}F_{n}(X)\right)\right|\nonumber\\
&\leq&C\left(n\Delta_n^2+1\right) +
C\left(n\Delta_n^3+1\right)+\left|\frac{T}{\theta} - 2\theta
\sqrt{T}\mathbb{E}\left(\Lambda_{n}F_{n}(X)\right)\right|\nonumber
\\
&\leq&C\left(n\Delta_n^2+1\right)+\left|\frac{T}{\theta} - 2\theta
\sqrt{T}\mathbb{E}\left(\Lambda_{n}F_{n}(X)\right)\right|.\label{ineq.A3}
\end{eqnarray}Further, by $\mathbb{E}\Lambda_{n}=0$,
\begin{eqnarray*}
\frac{T}{\theta} - 2\theta
\sqrt{T}\mathbb{E}\left(\Lambda_{n}F_{n}(X)\right)&=&\frac{T}{\theta}
- 2\theta \mathbb{E}\left(\Lambda_{n}S_{n}(X)\right)\\
&=&\frac{T}{\theta} - 2\theta\Delta_n \mathbb{E}\left[
\sum_{i=1}^{n}
e^{-\theta(t_{i}+t_{i-1})}\zeta_{t_{i-1}}\left(\zeta_{t_{i}}-\zeta_{t_{i-1}}\right)
\sum_{j=1}^{n} e^{-2\theta t_{j-1}}\zeta_{t_{j-1}}^2\right]\\
&=&\frac{T}{\theta} - 2\theta\Delta_n \sum_{i,j=1,i<j}^{n}
e^{-\theta(t_{i}+t_{i-1})} e^{-2\theta
t_{j-1}}\mathbb{E}\left[\zeta_{t_{i-1}}\left(\zeta_{t_{i}}-\zeta_{t_{i-1}}\right)
\zeta_{t_{j-1}}^2\right].
\end{eqnarray*}
Now, using the Wick formula, we get
\begin{eqnarray*}
\frac{T}{\theta} - 2\theta
\sqrt{T}\mathbb{E}\left(\Lambda_{n}F_{n}(X)\right)&=&
 \frac{T}{\theta} - 4\theta\Delta_n \sum_{i,j=1,i<j}^{n}
e^{-\theta(t_{i}+t_{i-1})} e^{-2\theta
t_{j-1}}\mathbb{E}\left[\zeta_{t_{i-1}}^2\right]\mathbb{E}\left[\left(\zeta_{t_{i}}-\zeta_{t_{i-1}}\right)^2\right]\\
&=&
 \frac{T}{\theta} - 4\theta\Delta_n \sum_{j=2}^{n}\sum_{i=1}^{j-1}
e^{-\theta(t_{i}+t_{i-1})} e^{-2\theta t_{j-1}}
\left[\frac{e^{2\theta
t_{i-1}}-1}{2\theta}\right]\left[\frac{e^{2\theta t_{i}}-e^{2\theta
t_{i-1}}}{2\theta}\right]\\
&=&
 \frac{T}{\theta} -\frac{\Delta_n}{\theta} \sum_{j=2}^{n} e^{-2\theta t_{j-1}}\sum_{i=1}^{j-1}
e^{\theta(t_{i}+t_{i-1})} \left[ 1-e^{-2\theta
t_{i-1}}\right]\left[1-e^{-2\theta \Delta_n}\right]\\
&=&
 \left(\frac{T}{\theta} -\frac{\Delta_n}{\theta} \left[1-e^{-2\theta \Delta_n}\right]
 \sum_{j=2}^{n} e^{-2\theta t_{j-1}}\sum_{i=1}^{j-1}
e^{\theta(t_{i}+t_{i-1})}\right)  \\
&&+\left(\frac{\Delta_n}{\theta}\left[1-e^{-2\theta \Delta_n}\right]
\sum_{j=2}^{n} e^{-2\theta t_{j-1}}\sum_{i=1}^{j-1}
e^{\theta\Delta_n}\right)\\
&=:&a_{n,T}+b_{n,T},
\end{eqnarray*}
where
\begin{eqnarray*} a_{n,T}&=&  \frac{T}{\theta} -\frac{\Delta_n}{\theta} \left[1-e^{-2\theta \Delta_n}\right]
 \sum_{j=2}^{n} e^{-2\theta \Delta_n(j-1)}\sum_{i=1}^{j-1}e^{\theta\Delta_n}
e^{2\theta\Delta_n(i-1)}\\
&=&  \frac{T}{\theta} -\frac{\Delta_n}{\theta} \left[1-e^{-2\theta
\Delta_n}\right]e^{\theta\Delta_n}
 \sum_{j=2}^{n} e^{-2\theta \Delta_n(j-1)}\frac{e^{2\theta\Delta_n(j-1)}-1}{e^{2\theta\Delta_n}-1}\\
&=&  \frac{T}{\theta} -\frac{\Delta_n}{\theta}  e^{-\theta\Delta_n}
 \sum_{j=2}^{n}\left(1-e^{-2\theta\Delta_n(j-1)}\right)\\
&=&  \frac{T}{\theta} -\frac{\Delta_n}{\theta}  e^{-\theta\Delta_n}
  \left[n-1-e^{-2\theta\Delta_n}\frac{\left(1-e^{-2\theta\Delta_n(n-1)}\right)}{1-e^{-2\theta\Delta_n}}\right]\\
&=&  \frac{T}{\theta}\left(1-e^{-\theta\Delta_n}\right)
+\frac{\Delta_n}{\theta} e^{-\theta\Delta_n}
 +\frac{\Delta_n}{\theta}e^{-3\theta\Delta_n}\frac{\left(1-e^{-2\theta\Delta_n(n-1)}\right)}{1-e^{-2\theta\Delta_n}}.
\end{eqnarray*}
Thus, by the fact that
$\displaystyle{\frac{1-e^{-2\theta\Delta_n}}{\Delta_n}}\rightarrow2\theta$
and
$\displaystyle{\frac{1-e^{-\theta\Delta_n}}{\Delta_n}}\rightarrow
\theta$ as $n\rightarrow\infty$,
\begin{eqnarray} \left|a_{n,T}\right|\leq
C\left(n\Delta_n^2+\Delta_n+1\right).\label{upper-a}
\end{eqnarray}
Similarly, using  $xe^{-2\theta x}\leq C e^{-\theta x}$
 for all $x\geq0$,
\begin{eqnarray}
b_{n,T}&=& \frac{\Delta_n}{\theta}\left[1-e^{-2\theta
\Delta_n}\right] \sum_{j=2}^{n} e^{-2\theta t_{j-1}} (j-1)
e^{\theta\Delta_n}\nonumber\\
&\leq& C\frac{\Delta_ne^{\theta\Delta_n}}{\theta}\left[
1-e^{-2\theta \Delta_n} \right]  \sum_{j=2}^{n}  e^{-\theta
(j-1)\Delta_n}\nonumber\\
&=& C\frac{\Delta_ne^{\theta\Delta_n}}{\theta}\left[ 1-e^{-2\theta
\Delta_n} \right]   \frac{1-e^{-\theta (n-1)\Delta_n}}{1-e^{-\theta
\Delta_n}}\nonumber\\
&\leq& C\frac{\Delta_ne^{\theta\Delta_n}}{\theta}\left[ 1+e^{-\theta
\Delta_n} \right]\nonumber\\
&\leq& C \Delta_n.\label{upper-b}
\end{eqnarray}
Hence, it follows from \eqref{upper-a} and \eqref{upper-b} that
\begin{eqnarray*}\left|\frac{T}{\theta} - 2\theta
\sqrt{T}\mathbb{E}\left(\Lambda_{n}F_{n}(X)\right)\right|\leq
C\left(n\Delta_n^2+\Delta_n+1\right).
\end{eqnarray*}
This estimate and \eqref{ineq.A3} imply
\begin{eqnarray}\frac{1}{\sqrt{T}}\left\|A_{3,T}\right\|_{L^2(\Omega)}
\leq\frac{C}{\sqrt{T}}\sqrt{n\Delta_n^2+\Delta_n+1}
\leq C\left(\sqrt{\Delta_n}+\frac{1}{\sqrt{n}}+\frac{1}{\sqrt{n\Delta_n}}\right).\label{norm-A3}\end{eqnarray}
Combining \eqref{decomp.-AT}, \eqref{norm-A1}, \eqref{norm-A2},
\eqref{norm-A3} and $n\Delta_n^3=\frac{T^3}{n^2}\rightarrow 0$, we deduce that $\frac{1}{\sqrt{T}}A_T\rightarrow0$  in $L^2(\Omega)$,  and consequently, the assumption
$\mathcal{A}_2$   holds, and moreover, \[\frac{1}{\sqrt{T}}\left\|A_{T}\right\|_{L^2(\Omega)}\leq C\max\left(\frac{1}{\sqrt{T}},\sqrt{\frac{T}{n}},\sqrt{\frac{T^3}{n^2}}\right).\] Hence,
the conditions of Theorem \ref{main-thm} are satisfied for
\[G_T=\sqrt{T}f_{n}\left(X\right),\quad \frac{1}{\sigma}(G_T-\mathbb{E}G_T)+\frac{1}{\sqrt{T}}A_T=\sqrt{\frac{2\theta}{T}}\left(\sum_{i=1}^{n}
X_{t_{i-1}}\left(X_{t_{i}}-X_{t_{i-1}}\right)+\theta\Delta_{n}
\sum_{i=1}^{n} X_{t_{i-1}}^{2}\right),\] $a_T=0$ and
$V_T=F_{n}\left(Z\right)$. Therefore, using a similar argument as in \eqref{{Kol}-proof-tilde},
the proof is complete.
\end{proof}

\begin{theorem}\label{{Kol}-theta-bar}
  There exists $C>0$ depending only
on $\theta$ such that for all $n\geq1$,\begin{eqnarray*}
d_{Kol}\left(\sqrt{\frac{T}{2\theta}}\left(\theta-\bar{\theta}_{n}\right)
,\mathcal{N}  \right)&\leq&
C\max\left(\frac{1}{\sqrt{T}},\frac{T^2}{n^2}\right).
\end{eqnarray*}
\end{theorem}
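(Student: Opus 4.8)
The plan is to recast $\sqrt{\tfrac{T}{2\theta}}(\theta-\bar{\theta}_{n})$ as a ratio of the form \eqref{def-Q} and invoke Theorem \ref{main-thm}, recycling the objects already set up for $\widehat{\theta}_{n}$ in Theorem \ref{{Kol}-theta-hat}. Writing $S_n:=\Delta_n\sum_{i=1}^nX_{t_{i-1}}^2=Tf_n(X)$ and reading \eqref{AMLE2} as the discretization of \eqref{MLE-cont-2nd-form} in the denominator only, I would first record
\[
\sqrt{\tfrac{T}{2\theta}}\left(\theta-\bar{\theta}_{n}\right)=\frac{\sqrt{\tfrac{2\theta}{T}}\left[\theta S_n+\tfrac12\left(X_T^2-T\right)\right]}{\tfrac{1}{\rho\sqrt{T}}G_T},
\]
with the same choices $G_T=\sqrt{T}f_n(X)$, $\rho=\tfrac{1}{2\theta}$, $\sigma^2=\tfrac{1}{2\theta^3}$, $V_T=F_n(Z)$ and $R_T$ as in Theorem \ref{thm-{Kol}-theta-tilde}; since these coincide with the previous setting, Assumption $(\mathcal{A}_1)$ is already guaranteed by \eqref{error X-Z} and \eqref{variance-F(X)}. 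Because $\tfrac1\sigma=\theta\sqrt{2\theta}$ and $S_n-\mathbb{E}S_n=\sqrt{T}\,(G_T-\mathbb{E}G_T)$, the Gaussian fluctuation satisfies $\tfrac1\sigma(G_T-\mathbb{E}G_T)=\sqrt{\tfrac{2\theta}{T}}\,\theta\,(S_n-\mathbb{E}S_n)$, so subtracting it off identifies the remainder as $a_T=0$ together with $A_T=\sqrt{2\theta}\left[\left(\theta\,\mathbb{E}S_n-\tfrac T2\right)+\tfrac12X_T^2\right]$.

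The decisive point, and the only genuinely new step, is to verify Assumption $(\mathcal{A}_2)$ by showing that $A_T$ is bounded in $L^2(\Omega)$. The key structural feature is that the numerator $\tfrac12(X_T^2-T)$ of $\bar{\theta}_n$ is \emph{not} discretized --- only the denominator is --- so, unlike the $\Lambda_n$-type discretization error that forced the slower rate in Theorem \ref{{Kol}-theta-hat}, no $O(\sqrt{T})$ term survives. Concretely, I would use \eqref{var-Z} and \eqref{X-Z} to get $\mathbb{E}X_t^2=\tfrac{1-e^{-2\theta t}}{2\theta}$, evaluate the resulting geometric sum, and obtain $\theta\,\mathbb{E}S_n-\tfrac T2=-\tfrac{\Delta_n}{2}\,\tfrac{1-e^{-2\theta T}}{1-e^{-2\theta\Delta_n}}$, which stays bounded (it tends to $-\tfrac{1}{4\theta}$). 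Combined with $\|X_T^2\|_{L^2(\Omega)}=\sqrt3\,\mathbb{E}X_T^2\le\tfrac{\sqrt3}{2\theta}$, this yields $\|A_T\|_{L^2(\Omega)}\le C$, hence $\tfrac1{\sqrt{T}}\|A_T\|_{L^2(\Omega)}\le\tfrac{C}{\sqrt{T}}\to0$; if one insists on the literal requirement $A_T\in\mathcal{H}_2$, the deterministic part of $A_T$ can simply be absorbed into $a_T$ without altering any $L^2$-bound. I expect this boundedness to be the main obstacle, since it is exactly the cancellation (already visible in the continuous identity $\int_0^TX_s\,dW_s=\tfrac12(X_T^2-T)+\theta\int_0^TX_s^2\,ds$) that distinguishes $\bar{\theta}_n$ from $\widehat{\theta}_n$ and explains the improved rate.

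With $(\mathcal{A}_1)$ and $(\mathcal{A}_2)$ in hand, the conclusion follows by plugging into Theorem \ref{main-thm} and estimating the four resulting terms exactly as in \eqref{{Kol}-proof-tilde}. By homogeneity of cumulants and \eqref{max(k3,k4)-estimator1}, $\max\!\left(|\kappa_3(V_T/\sigma)|,\kappa_4(V_T/\sigma)\right)\le C\,\max\!\left(|\kappa_3(F_n(Z))|,|\kappa_4(F_n(Z))|\right)\le\tfrac{C}{n\Delta_n}=\tfrac CT$; by \eqref{error X-Z} and $\mathbb{E}f_n(Z)=\tfrac1{2\theta}$ the bias term is $T^{1/4}\big|\tfrac{1}{\rho\sqrt{T}}\mathbb{E}G_T-1\big|=2\theta\,T^{1/4}\,|\mathbb{E}(f_n(X)-f_n(Z))|\le\tfrac{C}{T^{3/4}}$; by \eqref{variance-F(X)} together with $(\mathbb{E}F_n(X))^2\le C/T$ the variance term is $\big|\mathbb{E}[(G_T-\mathbb{E}G_T)^2]-\sigma^2\big|\le C(\Delta_n^2+\tfrac1T)$; and the last term is $\tfrac{C}{\sqrt{T}}(\|R_T\|_{L^2(\Omega)}+\|A_T\|_{L^2(\Omega)})\le\tfrac{C}{\sqrt{T}}$. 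Summing these and using $\Delta_n^2=T^2/n^2$ together with $\tfrac1T,\tfrac1{T^{3/4}}\le\tfrac1{\sqrt{T}}$ gives the asserted bound $C\max\!\left(\tfrac1{\sqrt{T}},\tfrac{T^2}{n^2}\right)$.
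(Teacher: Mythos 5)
Your proposal is correct and follows essentially the same route as the paper: the same identification $G_T=\sqrt{T}f_n(X)$, $V_T=F_n(Z)$, $a_T=0$, $A_T=\sqrt{2\theta}\bigl(\tfrac12 X_T^2+(\theta\,\mathbb{E}S_n-\tfrac T2)\bigr)$, with the whole argument reducing to the $L^2$-boundedness of $A_T$ and then Theorem \ref{main-thm}. The only (harmless) differences are that you compute $\theta\,\mathbb{E}S_n-\tfrac T2$ directly as a geometric sum where the paper compares with the stationary process $Z$ via $\theta\Delta_n\sum_i\mathbb{E}Z_{t_{i-1}}^2=\tfrac T2$, and you add the careful remark about shifting the deterministic part of $A_T$ into $a_T$ so that $A_T\in\mathcal{H}_2$ holds literally, a point the paper leaves implicit.
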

\begin{proof}
It follows from \eqref{AMLE2} that
\begin{eqnarray}
\begin{gathered}
\sqrt{\frac{T}{2\theta}}\left(\theta-\bar{\theta}_{n}\right)
=\frac{\sqrt{\frac{2\theta}{T}}\left(\frac12\left(X_T^2-T\right)+\theta\Delta_{n}
\sum_{i=1}^{n}
\mathbb{E}X_{t_{i-1}}^{2}\right)}{\frac{2\theta}{\sqrt{T}}\left(\sqrt{T}f_{n}\left(X\right)\right)}
=\frac{\frac{1}{\sigma}(G_T-\mathbb{E}G_T)+\frac{1}{\sqrt{T}}A_T}{\frac{1}{\rho
\sqrt{T}}G_T},
\end{gathered}\label{app-theta-hat}
\end{eqnarray}
where, in this situation,  $G_T=\sqrt{T}f_{n}\left(X\right)$, $\rho
=\frac{1}{2\theta}$, $\sigma^2=\frac{1}{2\theta^3}$,  $a_T=0$ and
\[\frac{1}{\sigma}(G_T-\mathbb{E}G_T)+\frac{1}{\sqrt{T}}A_T=\sqrt{\frac{2\theta}{T}}\left(\frac12\left(X_T^2-T\right)+\theta\Delta_{n}
\sum_{i=1}^{n} \mathbb{E}X_{t_{i-1}}^{2}\right).\]
 Moreover,
\[V_T=\sqrt{T}\left(f_{n}\left(Z\right)-\mathbb{E}f_{n}\left(Z\right)\right)\]
and
\begin{eqnarray*}R_T=\sqrt{T}\left(G_T-\mathbb{E}G_T-V_T\right)
=T\left[\left(f_{n}\left(X\right)-f_{n}\left(Z\right)\right)-\left(\mathbb{E}f_{n}\left(X\right)-\mathbb{E}f_{n}\left(Z\right)\right)\right].
\end{eqnarray*}
In this case, we also notice that  the processes $G_T$, $ V_T$ and $R_T$ are exactly the same as in Theorem \eqref{thm-{Kol}-theta-tilde}, so, the assumption
$\mathcal{A}_1$   holds for those processes.  Now, it remains to prove that the assumption
$\mathcal{A}_2$ holds.\\
It follows from \eqref{X-Z} that $\theta\Delta_{n}
\sum_{i=1}^{n} \mathbb{E}Z_{t_{i-1}}^{2}=\frac{T}{2}$, so, we can
write
\begin{eqnarray*}
A_T&=&\sqrt{T}\left(\frac{1}{\sigma}(G_T-\mathbb{E}G_T)+\frac{1}{\sqrt{T}}A_T-\frac{1}{\sigma}(G_T-\mathbb{E}G_T)\right)
=\sqrt{2\theta}\left( \frac12\left(X_T^2-T\right)+\theta\Delta_{n}
\sum_{i=1}^{n} \mathbb{E}X_{t_{i-1}}^{2}  \right)\\
&=&\sqrt{2\theta}\left( \frac12X_T^2- \theta\Delta_{n}
\sum_{i=1}^{n}
\left[\mathbb{E}X_{t_{i-1}}^{2}-\mathbb{E}Z_{t_{i-1}}^{2}  \right]
\right).
\end{eqnarray*}
Moreover, since $X_T$ is Gaussian, it is easy to see
 \begin{equation}
\mathbb{E}X_{T}^4=3\left(\mathbb{E}X_{T}^2\right)^2=3\left(\mathbb{E}Z_{T}^2-2e^{-\theta
T}\mathbb{E}\left(Z_0Z_T\right)+e^{-2\theta
T}\mathbb{E}Z_{0}^2\right)^2\leq C,
\end{equation}
and
 \begin{eqnarray*}
\left|\Delta_{n} \sum_{i=1}^{n}
\left[\mathbb{E}X_{t_{i-1}}^{2}-\mathbb{E}Z_{t_{i-1}}^{2}
\right]\right|&=&\left|\Delta_{n} \sum_{i=1}^{n} \left[-2e^{-\theta
t_{i-1}}\mathbb{E}\left(Z_0Z_{t_{i-1}}\right)+e^{-2\theta
t_{i-1}}\mathbb{E}Z_{0}^2 \right]\right|\\ &\leq&  C \Delta_{n}
\sum_{i=1}^{n}  e^{-\theta t_{i-1}}\\ &=& C \Delta_{n}
\frac{1-e^{-\theta n\Delta_n}}{1-e^{-\theta \Delta_n}}
\\ &=&C,
\end{eqnarray*}
since
$\displaystyle{\frac{1-e^{-\theta\Delta_n}}{\Delta_n}}\rightarrow
\theta$ as $n\rightarrow\infty$.\\
Consequently, $A_T$ is bounded in $L^2(\Omega)$,  and then, the
assumption $\mathcal{A}_2$  holds. Also, the conditions of Theorem
\ref{main-thm} are satisfied for $G_T=\sqrt{T}f_{n}\left(X\right)$,
$V_T=F_{n}\left(Z\right)$, $a_T=0$ and
\[\frac{1}{\sigma}(G_T-\mathbb{E}G_T)+\frac{1}{\sqrt{T}}A_T=\sqrt{\frac{2\theta}{T}}\left(\frac12\left(X_T^2-T\right)+\theta\Delta_{n}
\sum_{i=1}^{n} \mathbb{E}X_{t_{i-1}}^{2}\right)\]. Therefore, using \eqref{{Kol}-proof-tilde},
the proof is done. \end{proof}

\end{document}